\newcommand{\expr}[1]{\left( #1 \right)}
\newcommand{\laplace}{\mathcal{L}}
\newcommand{\C}{\mathbf{C}}
\newcommand{\R}{\mathbf{R}}
\newcommand{\pr}{\mathbb{P}}
\newcommand{\ex}{\mathbf{E}}
\newcommand{\ind}{\mathbf{1}}
\newcommand{\ph}{\varphi}
\theoremstyle{plain}
\newtheorem{theorem}{Theorem}
\newtheorem{corollary}{Corollary}
\newtheorem{proposition}{Proposition}
\theoremstyle{definition}
\newtheorem{remark}{Remark}
\theoremstyle{remark}
\DeclareMathOperator{\re}{Re}
\DeclareMathOperator{\im}{Im}
\DeclareMathOperator{\Arg}{Arg}
\newcommand{\formula}[2][nolabel]
{\ifthenelse{\equal{#1}{nolabel}}
 {\begin{align*} #2 \end{align*}}
 {\ifthenelse{\equal{#1}{}}
  {\begin{align} #2 \end{align}}
  {\begin{align} \label{#1} #2 \end{align}}
 }
}
\numberwithin{equation}{section}
\begin{document}

%
%

\title[The entrance law of the excursion measure of the reflected process]{The entrance law of the
excursion measure of the reflected process for some classes of L\'evy processes}

\author{Lo\"i{}c Chaumont and Jacek Ma{\l}ecki}
\address{Lo\"i{}c Chaumont, \\ LAREMA, D\'e{}partement de Math\'e{}matique\\ Universit\'e d'Angers \\ Bd Lavoisier - 49045, Angers Cedex 01, France}
\email{loic.chaumont@univ-angers.fr}
\address{Jacek Ma{\l}ecki\\ Faculty of Pure and Applied Mathematics \\ Wroc{\l}aw University of Science and Technology \\ ul. Wybrze{\.z}e Wyspia{\'n}\-skiego 27 \\ 50-370 Wroc{\l}aw, Poland}
\email{jacek.malecki@pwr.edu.pl}

\keywords{L\'evy process, supremum process, reflected process, It\^o measure, entrance law, stable process, subordinate Brownian motion, integral representation}
\subjclass[2010]{60G51, 46N30}

\thanks{J. Ma{\l}ecki is supported by the Polish National Science Centre (NCN) grant no. 2015/19/B/ST1/01457.}

\begin{abstract} We provide integral formulae for the Laplace transform of the entrance law of the reflected excursions for symmetric L\'evy processes in terms of their characteristic exponent. For subordinate Brownian motions and stable processes we express the density of the entrance law in terms of the generalized eigenfunctions for the semigroup of the process killed when exiting the positive half-line. We use the formulae to study in-depth properties of the density of the entrance law such as asymptotic behavior of its derivatives in time variable.
\end{abstract}

\maketitle

\section{Introduction}
\label{section:introduction}

It follows from excursion theory that the trajectories of a L\'evy process can be decomposed using the excursions of 
the process reflected in its past infimum. This result justifies the importance of knowing the excursion measure of the reflected 
process and more particularly, the entrance law of this measure. There are also several interesting applications of this entrance 
law. First it is directly related to the potential measure of the time space ladder height process, see Lemma 1 in \cite{bib:ch13}. 
Moreover it provides a useful  expression of the distribution density of the  supremum of the L\'evy process before fixed times, 
\cite{bib:ch13}, \cite{bib:chm16}. More recently it has been involved in the study of the probability of creeping through curves of 
L\'evy processes, \cite{bib:chp19}.

In this article we obtain integral representations of the densities and the Laplace transforms of the entrance laws of the 
reflected excursions for two classes of real valued L\'evy processes. The first class consists of symmetric L\'evy processes, with a 
particular emphasis on subordinate Brownian motions, when the L\'evy measure of the underlying subordinator has a completely monotone density. The other class is that of stable processes. The presented formulae for symmetric 
processes are given 
in terms of the corresponding L\'evy-Kchintchin exponent $\Psi(\xi)$ and the related generalized eigenfunctions introduced by 
M. Kwa\'snicki in \cite{bib:mk10}. In the stable case, we based the calculations on the generalized eigenfunctions studied 
recently by 
A. Kuznetsov and M. Kwa\'snicki in \cite{bib:kk18}. Then we used the formulae obtained for the entrance law densities to 
derive corresponding 
integral representations for supremum densities. Although the theory of L\'evy processes is very rich and abounds in numerous general relationships, as those coming from the Wiener-Hopf factorizations, there are few examples where the explicit 
representations of the 
related densities are available. Apart from Brownian motion and Cauchy process, some series representations were recently 
found in \cite{bib:HubalekKuznetsov:2011}, \cite{bib:ak13}, \cite{bib:HackmannKuznetsov:2013} in the case of stable processes. 
A different approach was presented in \cite{bib:kmr13}, where the theory of  Kwa\'snicki's generalized eigenfunctions were used to described the first passage time density through a barrier for subordinate Brownian motions with regular L\'evy measures. This concept was generalized to non-symmetric stable processes in \cite{bib:kk18}. In the present paper we stay in this framework and show that a similar approach leads to integral representations of the entrance law density, the supremum density or the density of 
joint distribution of the process itself and its supremum. Then we apply the obtained formulae to study the asymptotic behavior 
of the derivatives in time variable of the entrance law densities of the reflected excursions. Let us finally mention that these formulae can be used to perform numerical simulations and study in-depth properties of the process coupled with its past supremum. 
 
\section{Preliminaries}
\label{section:preliminaries}

Let $(X,\pr)$ be the real valued L\'evy process whose characteristic exponent $\Psi(\xi)$ is characterized in terms 
of the L\'evy triplet  $(a,\sigma^2,\Pi)$ by the L\'evy-Kchintchin formula
\begin{equation*}
    \Psi(\xi) = -ai\xi+\frac12 \sigma^2 \xi^2 -\int_{\R\setminus\{0\}}\left(e^{i\xi x}-1-i\xi\ind_{\{|x|<1\}}\right)\Pi(dx)\/,\quad \xi\in\R\/.
\end{equation*}
We write $\pr_x$ for the law of the process starting from $x\in\R$. We denote by $X^*=-X$ the dual process and $\pr_x^*$ stands for its law with respect to $\pr_x$. The past supremum and past infimum of $X$ before a deterministic time $t\geq 0$ are
\begin{eqnarray*}
  \overline{X}_t= \sup\{X_s;0\leq s\leq t\}\/, \quad \underline{X}_t= \inf\{X_s;0\leq s\leq t\}\/.
\end{eqnarray*}
For given $t>0$ we write $f_t(dx)=\pr(\overline{X}_t\in dx)$ for the corresponding distribution and $f_t(x)$ stands for its density with respect 
to the Lebesgue measure on $(0,\infty)$ whenever it exists. Recalling that the reflected processes $\overline{X}-X$ and $X-\underline{X}$ 
are Markovian, we write $L_t$ and $L_t^*$ for their local times at $0$ respectively, where these are normalized in the following way
\begin{eqnarray*}
   \ex \left(\int_0^\infty e^{-t}dL_t \right) = \ex \left(\int_0^\infty e^{-t}dL_t^* \right)=1\/.
\end{eqnarray*}
We write $n$ (and $n^*$) for the It\^o measure of the excursions away from $0$ of the reflected process $\overline{X}-X$ (resp. $X-\underline{X}$). Our main objects of studies are the corresponding entrance laws defined by 
\begin{eqnarray*}
    q_t(dx) = n(X_t\in dx,t<\zeta)\/,\quad q_t^*(dx) = n^*(X_t\in dx,t<\zeta)\/,\quad t>0\/,
\end{eqnarray*}
where $\zeta$ is the life time of the  generic excursion and $q_t(x)$, $q_t^*(x)$ denotes the 
densities on $(0,\infty)$ of  $q_t(dx)$ and $q_t^*(dx)$, whenever they exist. In this paper, it will always be 
assumed that 0 is regular for both half-lines $(-\infty,0)$ and $(0,\infty)$. In this case, the double Laplace transform of $q_t(dx)$ is 
given by 
\begin{equation}\label{0000}
\int_0^\infty \int_0^\infty e^{-\xi x}e^{-zs}q_s(dx)ds = \frac{1}{\kappa(z,\xi)}\/,
\end{equation}
where $\kappa(z,\xi)$ is the Laplace exponent of the ladder process $(L_t^{-1},H_t)$, $t<L(\infty)$. 
Here $L_t^{-1}$ denotes the right continuous inverse of $L_t$ (ladder time process) and the ladder-height process is defined by $H_t = X_{L_t^{-1}}$. Analogous relations hold for $q_t^*(dx)$ and the Laplace exponent $\kappa^*(z,\xi)$ for the ladder process $((L_t^*)^{-1},H_t^*)$. Formula (\ref{0000}) actually shows that $q_s(dx)ds$ is the potential measure of $(L^{-1},H)$. We denote by $h$ the renewal function of the ladder height process $H$, that is 
$$
   h(x) = \int_0^\infty \pr(H_t\leq x)\,dt\/,\quad x\geq 0\/.
$$

In the light of Theorem 6 in \cite{bib:ch13}, the entrance laws $q_t(dx)$ and $q_t^*(dx)$ seem to be basic objects in the study of the supremum distributions. More precisely, under our assumption that $0$ is regular for both negative and positive half-lines, the representation (4.4) from \cite{bib:ch13} reads as
\begin{eqnarray}
\label{eq:ftpt:formula}
   \pr(\overline{X}_t\in dx,\overline{X}_t-X_t\in dy) = \int_0^t q_s^*(dx)q_{t-s}(dy)ds\/,
\end{eqnarray}
which, in particular, implies
\begin{equation}
  \label{eq:ft:formula}
	   f_t(dx) = \int_0^t n(t-s<\zeta)\,q_s^*(dx)\/.
\end{equation}

Finally, for $x>0$, we denote by $\mathbb{Q}^*_x$ the law of the processes killed when exiting the positive half-line, i.e. 
$$
   \mathbb{Q}_x^*(\Lambda,t<\zeta) = \pr_x(\Lambda,t<\tau_0^{-})\/,\quad \Lambda\in\mathcal{F}_t\/,
$$
where $\tau_0^-=\inf\{t>0: X_t<0\}$. The law $\mathbb{Q}_x$ is defined in the same way, but with respect to the dual process. The corresponding semigroups are defined as
$$
  \textbf{Q}_t^*f(x) = \mathbb{Q}_x^*f(X_t)\/,\quad \textbf{Q}_tf(x) = \mathbb{Q}_x f(X_t),
$$
for non-negative Borel functions $f$. We also write $q_t^*(x,dy)$, $q_t(x,dy)$ and $q_t^*(x,y)$, $q_t(x,y)$ for the corresponding transition probability measures and their densities whenever they exist. Recall that whenever $q_t^*(x,\cdot)$ and $q_t(x,\cdot)$ are absolutely continuous, the duality relation holds
$$
   q_t^*(x,y) = q_t(y,x)\/.
$$

\section{Symmetric L\'evy processes and subordinated Brownian motions}
\label{sec:sym}

This section is devoted to symmetric L\'evy processes with some addition regularity assumptions on the L\'evy-Kchintchin exponent $\Psi(\xi)$ presented in details below. We also exclude compound Poisson processes from our considerations. Note that the symmetry assumptions simplify the general exposure presented in Preliminaries, where, roughly speaking, we can remove the notation with $*$. Moreover, the ladder time process is the $1/2$-stable subordinator for every symmetric L\'evy process, which implies that 
\begin{equation}
\label{eq:sym:n}
    n(t<\zeta) = \frac{t^{-1/2}}{\sqrt{\pi}}\/,\quad t>0\/.
\end{equation}
Finally, we recall the integral representation of the Laplace exponent of the ladder process
\begin{equation}
\label{eq:kappa:formula}
   \kappa(z,\xi) = {\sqrt{z}}\exp\left(\frac{1}{\pi}\frac{\xi\log(1+\frac{\Psi(\zeta)}{z})}{\xi^2+\zeta^2}\,d\zeta\right)\/,\quad 
	z,\xi\geq 0\/,
\end{equation}
where, in the symmetric case, $\Psi(\xi)$ is a real-valued function. 

Our first result gives the expression for the Laplace transform of $q_t(dx)$ (for fixed $t>0$) in the case of symmetric L\'evy processes with increasing L\'evy-Khintchin exponent.  This is an analogue of Theorem 4.1 in \cite{bib:kmr11}, 
where the corresponding formula for $\overline{X}_t$ was derived. Note that even though the formulae for the Laplace transforms of 
$q_t(dx)$ and $\pr(\overline{X}_t\in dx)$ seem to be similar, passing from one to the other by using \eqref{eq:ft:formula} 
and \eqref{eq:sym:n} is not straightforward.

 \begin{theorem}
\label{thm:sym:Laplace}
Let $(X,\mathbb{P})$ be a symmetric L\'e{}vy process that is not a compound Poisson process.
Assume that the L\'evy-Khintchin exponent $\Psi(\xi)$ of $(X,\mathbb{P})$ is increasing in $\xi>0$. Then
\begin{equation}
\label{eq:qt:laplaceformula}
\int_0^\infty e^{-\xi x}q_t(dx) = \frac1\pi \int_0^\infty \frac{\lambda\Psi'(\lambda)}{\lambda^2+\xi^2}\exp\left(\frac1\pi \int_0^\infty \frac{\xi \log\frac{\lambda^2-u^2}{\Psi(\lambda)-\Psi(u)}}{\xi^2+u^2}du\right)e^{-t\Psi(\lambda)}d\lambda\/.
\end{equation} 
\end{theorem}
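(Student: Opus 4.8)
The starting point is formula \eqref{0000}, which says that for fixed $\xi>0$ the function $g(t,\xi):=\int_0^\infty e^{-\xi x}q_t(dx)$ has Laplace transform $\int_0^\infty e^{-zt}g(t,\xi)\,dt=1/\kappa(z,\xi)$, $z>0$. The plan is to show that the right-hand side of \eqref{eq:qt:laplaceformula}, viewed as a function of $t$, has exactly this Laplace transform, and then to invoke uniqueness of Laplace transforms together with continuity in $t$. First I would simplify $\kappa$: writing $\log(1+\Psi(u)/z)=\log(z+\Psi(u))-\log z$ in \eqref{eq:kappa:formula} and using $\frac1\pi\int_0^\infty\frac{\xi}{\xi^2+u^2}\,du=\frac12$ (so the $\log z$ part produces exactly $z^{-1/2}$, cancelling the prefactor $\sqrt z$), one obtains
\begin{equation*}
\kappa(z,\xi)=\exp\left(\frac1\pi\int_0^\infty\frac{\xi\log(z+\Psi(u))}{\xi^2+u^2}\,du\right),\qquad z,\xi>0 .
\end{equation*}
For fixed $\xi>0$ this exhibits $z\mapsto\Phi(z):=1/\kappa(z,\xi)$ as a function holomorphic on $\C\setminus(-\infty,0]$, positive on $(0,\infty)$, hence satisfying $\Phi(\bar z)=\overline{\Phi(z)}$.

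Next I would recover $\Phi$ from its boundary values on the cut by a Hankel (keyhole) contour argument. From the representation above one reads off $\Phi(z)=O(\abs{z}^{-1/2})$ as $\abs z\to\infty$ away from the negative axis and that $\Phi$ extends continuously and boundedly to $z=0$; applying Cauchy's formula on a contour encircling $z$ and wrapping around $(-\infty,0]$, and letting the outer radius go to $\infty$ and the inner radius to $0$, yields the Stieltjes representation
\begin{equation*}
\Phi(z)=\frac1\pi\int_0^\infty\frac{\im\Phi(-s-i0)}{z+s}\,ds,\qquad z\in\C\setminus(-\infty,0].
\end{equation*}

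The heart of the matter is the computation of $\im\Phi(-s-i0)$. I would parametrise $s=\Psi(\lambda)$, $\lambda>0$, and send $z=-\Psi(\lambda)-i\eps\to-\Psi(\lambda)-i0$ in the representation of $\kappa$: for $u>\lambda$ one has $z+\Psi(u)\to\Psi(u)-\Psi(\lambda)>0$, while for $u<\lambda$ one has $z+\Psi(u)\to-(\Psi(\lambda)-\Psi(u))-i0$, so $\log(z+\Psi(u))\to\log(\Psi(\lambda)-\Psi(u))-i\pi$; since $\frac1\pi\int_0^\lambda\frac{\pi\xi}{\xi^2+u^2}\,du=\arctan(\lambda/\xi)$ this gives
\begin{equation*}
\frac{1}{\kappa(-\Psi(\lambda)-i0,\xi)}=\exp\left(-\frac1\pi\int_0^\infty\frac{\xi\log\abs{\Psi(\lambda)-\Psi(u)}}{\xi^2+u^2}\,du\right)e^{i\arctan(\lambda/\xi)} .
\end{equation*}
Taking imaginary parts, using $\sin\arctan(\lambda/\xi)=\lambda/\sqrt{\lambda^2+\xi^2}$, and the elementary Poisson-integral identity $\frac1\pi\int_0^\infty\frac{\xi\log\abs{\lambda^2-u^2}}{\xi^2+u^2}\,du=\frac12\log(\lambda^2+\xi^2)$, one rewrites
\begin{equation*}
\im\Phi(-\Psi(\lambda)-i0)=\frac{\lambda}{\lambda^2+\xi^2}\exp\left(\frac1\pi\int_0^\infty\frac{\xi\log\frac{\lambda^2-u^2}{\Psi(\lambda)-\Psi(u)}}{\xi^2+u^2}\,du\right),
\end{equation*}
which is precisely the $\lambda$-dependent factor in \eqref{eq:qt:laplaceformula}. (Here the ratio $\frac{\lambda^2-u^2}{\Psi(\lambda)-\Psi(u)}$ is positive for $u\neq\lambda$ and tends to $2\lambda/\Psi'(\lambda)$ as $u\to\lambda$, so the inner integrand is continuous.)

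Finally I would change variables $s=\Psi(\lambda)$, $ds=\Psi'(\lambda)\,d\lambda$, in the Stieltjes representation of $\Phi$ — the range $(0,\infty)$ is covered because $\Psi(0^+)=0$ and $\Psi(\infty)=\infty$, $\Psi$ not being a compound Poisson exponent — obtaining
\begin{equation*}
\frac{1}{\kappa(z,\xi)}=\frac1\pi\int_0^\infty\frac{\lambda\Psi'(\lambda)}{(\lambda^2+\xi^2)(z+\Psi(\lambda))}\exp\left(\frac1\pi\int_0^\infty\frac{\xi\log\frac{\lambda^2-u^2}{\Psi(\lambda)-\Psi(u)}}{\xi^2+u^2}\,du\right)d\lambda .
\end{equation*}
Writing $\frac1{z+\Psi(\lambda)}=\int_0^\infty e^{-zt}e^{-t\Psi(\lambda)}\,dt$ and interchanging the integrals (legitimate by nonnegativity) shows that the right-hand side of \eqref{eq:qt:laplaceformula} has Laplace transform $1/\kappa(z,\xi)$ in the time variable; comparing with \eqref{0000}, uniqueness of Laplace transforms and continuity in $t$ of both sides finish the proof. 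I expect the main technical obstacle to be the justification of the Hankel deformation: controlling $\Phi$ uniformly as $\abs z\to\infty$ off the cut and as $z\to0$, and verifying that the boundary density $\im\Phi(-s-i0)$ is locally integrable on $(0,\infty)$ against $(z+s)^{-1}$ — near $s=0$ it vanishes linearly, while its integrability at $s=\infty$ follows from the a priori bound $g(t,\xi)\le q_t((0,\infty))=n(t<\zeta)=t^{-1/2}/\sqrt\pi$ coming from \eqref{eq:sym:n}.
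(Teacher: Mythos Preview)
Your proposal is correct and follows essentially the same route as the paper: both establish the Stieltjes-type representation
\[
\frac{1}{\kappa(z,\xi)}=\frac1\pi\int_0^\infty\frac{\im\Phi(-s-i0)}{z+s}\,ds,
\]
compute the boundary density explicitly (your Poisson-integral identity is exactly the simplification the paper performs), change variables $s=\Psi(\lambda)$, write $(z+\Psi(\lambda))^{-1}$ as a Laplace transform in $t$, and conclude by uniqueness. The only difference is in packaging: the paper shows that $\Arg(\sqrt{z}\,\ph(\xi,z))\in(0,\pi)$ for $\im z>0$ and then invokes the standard representation theorem for Stieltjes functions (with $b_1=b_2=0$), whereas you carry out the underlying keyhole-contour computation by hand. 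The Stieltjes-function route has the advantage that the growth/boundary estimates you flag as the ``main technical obstacle'' are already built into that theory, so one does not have to justify the contour deformation directly; conversely, your direct computation is more self-contained and makes the mechanism (the jump of $\log(z+\Psi(u))$ across the cut for $u<\lambda$) completely transparent.
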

\begin{proof}
  The proof is based on the same idea as in the proof of Theorem 4.1 in \cite{bib:kmr11} with a slight modification of the arguments. For the completeness of the exposure and the convenience of the reader we present it below. We put $\psi(\xi) = \Psi(\sqrt{\xi})$ for $\xi > 0$ and define 
  \formula{
     \ph(\xi, z) & =  \exp\expr{-\frac{1}{\pi} \int_0^\infty \frac{\xi \log (1 + \frac{\psi(\zeta^2)}{z})}{\xi^2 + \zeta^2} \, d\zeta}\/,
  }
	which is $\sqrt{z}(\kappa(z,\xi))^{-1}$ by \eqref{eq:kappa:formula}. Obviously, for fixed $\xi>0$, the function $\ph(\xi,z)$ is a holomorphic function of $z$, which is positive for $z>0$. Note also that $\lim_{z\to 0+}\ph(\xi, z) = 0$ (by monotone convergence) and $\lim_{z\to \infty}\ph(\xi, z) = 1$ (by dominated convergence). Moreover, as it was shown in \cite{bib:kmr11}, that for $\im z>0$ we have 
  \formula{
     \Arg\ph(\xi,z) = -\frac{1}{\pi}\int_0^\infty \frac{\xi \Arg(1+\psi(\zeta^2)/z)}{\xi^2+\zeta^2}\,d\zeta \in (0,\pi/2)\/.
  }
  Thus $\Arg(\sqrt{z}\ph(\xi,z))\in (0,\pi)$ for $\im z>0$. This is equivalent to $h_\xi(z) = \ph(\xi,z)/\sqrt{z}$ being a Stieltjes function (for fixed $\xi$). In general, a function $g(z)$ is said to be a Stielties function if 
\begin{equation}
\label{eq:stiel:rep}
   g(z) = \frac{b_1}{z}+b_2+\frac1\pi \int_0^\infty \frac{1}{z+\zeta}\,{\nu(d\zeta)}\/,\quad z\in \C\setminus(-\infty,0]\/,
 \end{equation}
 where $b_1,b_2\geq 0$ and $\nu(d\zeta)$ is a Radon measure on $(0,\infty)$ such that $\int\min(1,\zeta^{-1})\mu(d\zeta)<\infty$. The constants and a measure appearing in the definitions of Stielties functions are given by
\begin{equation}
 \label{eq:stiel:limits}
   b_1 = \lim_{z\to 0+}zg(z)\/,\quad b_2 = \lim_{z\to \infty}g(z)\/, \quad \nu(d\zeta) = \lim_{\varepsilon\to 0+}\im(-g(-\zeta+i\varepsilon)d\zeta)\/.
 \end{equation}
Note that the last limit is understood in the sense of weak limit of measures. Since
	\begin{eqnarray*}
	   \lim_{z\to 0+}z h_\xi(z) = \lim_{z\to 0+}\sqrt{z}\ph(\xi,z) = 0\/,\quad \lim_{z\to 0+} h_\xi(z) = \lim_{z\to 0+}\ph(\xi,z)/\sqrt{z} = 0
	\end{eqnarray*}
	the constants appearing in the representation (\ref{eq:stiel:rep}) for Stielties function $h_\xi(z)$ are zero. Moreover, for $z=\psi(\lambda^2)$ we get 
  \begin{eqnarray*}
  h^+_\xi(-z) &=& \lim_{\varepsilon\to 0^+} h_\xi(-z+i\varepsilon)\\
	&=& \frac{i}{\sqrt{\psi(\lambda^2)}}\frac{\lambda (\lambda + \xi i) }{\lambda^2 + \xi^2}\exp\left(\frac1\pi \int_0^\infty \frac{\xi \log\frac{\psi(\lambda^2)}{\lambda^2}\frac{\lambda^2-u^2}{\psi(\lambda^2)-\psi(u^2)}}{\xi^2+u^2}du\right)\\
   &=& \frac{ \lambda i - \xi  }{\lambda^2 + \xi^2}\exp\left(\frac1\pi \int_0^\infty \frac{\xi \log\frac{\lambda^2-u^2}{\psi(\lambda^2)-\psi(u^2)}}{\xi^2+u^2}du\right)\/.
  \end{eqnarray*}
   Therefore, by (\ref{eq:stiel:limits}), for every $z>0$ we have
\begin{eqnarray*}
 \frac{\ph(\xi, z)}{\sqrt{z}} & = & \frac{1}{\pi} \int_0^\infty \im h^+_\xi(-\zeta) \, \frac{1}{z + \zeta} \, d\zeta \\
 &=&\frac{1}{\pi} \int_0^\infty 2 \lambda \psi'(\lambda^2) \im h^+_\xi(-\psi(\lambda^2)) \, \frac{1}{z + \psi(\lambda^2)} \, d\lambda \\
 &=& \frac{2}{\pi} \int_0^\infty \lambda \psi'(\lambda^2) \frac{\lambda}{\lambda^2 + \xi^2} \, \frac{1}{z + \psi(\lambda^2)}\exp\left(\frac1\pi \int_0^\infty \frac{\xi \log\frac{\lambda^2-u^2}{\psi(\lambda^2)-\psi(u^2)}}{\xi^2+u^2}du\right) \, d\lambda \\
 &=& \int_0^\infty e^{-tz} \left(\frac{1}{\pi}\int_0^\infty \frac{\lambda\Psi'(\lambda)}{\lambda^2+\xi^2}\exp\left(\frac1\pi \int_0^\infty \frac{\xi \log\frac{\lambda^2-u^2}{\Psi(\lambda)-\Psi(u)}}{\xi^2+u^2}du\right)e^{-t\Psi(\lambda)}d\lambda\right)dt\/.
\end{eqnarray*}
Thus, the Laplace transform of the right-hand side of (\ref{eq:qt:laplaceformula}) is equal to $1/\kappa(z,\xi)$ and the theorem follows from uniqueness of the Laplace transform.
\end{proof}

\medskip

From now on, for the rest of the section, we will follow the approach presented in \cite{bib:kmr13} and restrict our consideration to the case where $(X,\mathbb{P})$ is a subordinate Brownian motion whose underlying subordinator has a complete monotone density. 
The process $(X,\mathbb{P})$ has the latter form if and only if its characteristic exponent $\Psi(\xi)$ can be written as $\Psi(\xi)=\psi(\xi^2)$
for a complete Bernstein function $\psi$ (see Proposition 2.3 in \cite{bib:mk10}). A function $\psi(\xi)$ is called a \textit{complete Bernstein function} (CBF) if
 \begin{eqnarray}
\label{eq:cbf:rep}
   \psi(z) = a_1+a_2z+\frac1\pi \int_0^\infty \frac{z}{z+\zeta}\frac{\mu(d\zeta)}{\zeta}\/,\quad z\in \C\setminus(-\infty,0]\/,
 \end{eqnarray}
 where $a_1\geq 0$, $a_2 \geq 0$ and $\mu(d\zeta)$ is a Radon measure on positive half-line such that $\int\min(\zeta^{-1},\zeta^{-2})\mu(d\zeta)$ is finite. As in the Stielties function representation, the above-given constants and the measure $\mu$ are determined by suitable 
limits as follows
 \begin{equation}
\label{eq:cbf:limits}
   a_1 = \lim_{z\to 0+}\psi(z)\/,\quad a_2 = \lim_{z\to \infty}\frac{\psi(z)}{z}\/, \quad \mu(d\zeta) = \lim_{\varepsilon\to 0+}\im(\psi(-\zeta+i\varepsilon)d\zeta)\/.
 \end{equation}

The spectral theory of subordinate Brownian motion on a half-line was developed by M.~Kwa\'snicki in \cite{bib:mk10}, where the generalized eigenfunctions $F_\lambda(x)$ of the transition semigroup $\textbf{Q}_t$ of the process $(X,\mathbb{P})$ killed upon leaving the half-line $[0,\infty)$ were constructed. Some additional properties of $F_\lambda(x)$ were also studied in \cite{bib:kmr13}. For a fixed CBF $\psi$ 
and $\lambda>0$ the generalized eigenfunctions of $\textbf{Q}_t$ with eigenvalue $e^{-t\psi(\lambda^2)}$ are given by 
\begin{equation}
   \label{eq:Fl:definition}
  F_\lambda(x) = \sin(x\lambda+\vartheta_\lambda)-G_\lambda(x)\/,\quad x>0
\end{equation}
where the phase shift $\vartheta_\lambda$ belongs to $[0,\pi/2)$ and is given by
\begin{eqnarray*}
   \vartheta_\lambda = -\frac{1}{\pi}\int_0^\infty \frac{\lambda}{\lambda^2-u^2}\log\frac{\psi'(\lambda^2)(\lambda^2-u^2)}{\psi(\lambda^2)-\psi(u^2)}du\/,\quad \lambda>0\/.
\end{eqnarray*}
Recall the following upper-bounds (Proposition 4.3  and Proposition 4.5 in \cite{bib:kmr13} respectively)
\begin{equation}
\label{eq:theta:upbound1}
\vartheta_\lambda\leq \left(\sup_{\xi>0}\frac{\xi|\psi''(\xi)|}{\psi'(\xi)}\right)\frac{\pi}{4}\/.
\end{equation}
\begin{equation}
\label{eq:theta:upbound2}
\vartheta_\lambda\leq \frac{\pi}{2}-\arcsin\sqrt{\lambda^2\frac{\psi'(\lambda^2)}{\psi(\lambda^2)}}\/,\quad \lambda>0\/.
\end{equation}
The function $G_\lambda$ is the Laplace transform of the finite measure
\begin{equation*}
  \gamma_\lambda(d\xi) = \frac{1}{\pi} \left(\im\frac{\lambda \psi'(\lambda^2)}{\psi(\lambda^2)-\psi^+(\xi^2)}\right)\exp\left(-\frac{1}{\pi}\int_0^\infty \frac{\xi}{\xi^2+u^2}\log\frac{\psi'(\lambda^2)(\lambda^2-u^2)}{\psi(\lambda^2)-\psi(u^2)}du\right)\,d\xi\/.
\end{equation*}
Here $\psi^+$ denotes the holomorphic extension of $\psi$ in the complex upper half-plane. The Laplace transform of $F_\lambda(x)$ is given by 
\begin{equation}
\label{eq:Fl:laplacetransform}
   \laplace{F}_\lambda(\xi) =\frac{\lambda}{\lambda^2+\xi^2}\exp\left(\frac{1}{\pi}\int_0^\infty\frac{z}{z^2+u^2}\log\frac{\psi'(\lambda^2)(\lambda^2-u^2)}{\psi(\lambda^2)-\psi(u^2)}du\right)\/.
	\end{equation}
Recall also the following estimates 
\begin{equation}
\label{eq:Fl:laplaceestimate}
   |\laplace{F}_\lambda(\xi)|\leq \frac{|\lambda+\xi|}{|\lambda^2+\xi^2|}\/,\quad x>0\/,\re\xi>0\/.
\end{equation}   
Proposition 5.4 in \cite{bib:kmr13} states that for unbounded $\psi$ such that $\limsup_{\lambda\to0^+}{\vartheta_\lambda}<\pi/2$ we have the following limiting behavior 
\begin{equation}
\label{eq:Fl:limit}
\lim_{\lambda\to 0^+}\frac{F_{\lambda}(x)}{\lambda\sqrt{\psi'(\lambda^2)}} = h(x)\/,\quad x \geq 0\/,
\end{equation}
and the convergence is locally uniform in $x\geq 0$.

The functions $F_\lambda(x)$ were used to find the integral representations for the density function of $\tau_0^-$ and its derivatives (see Theorem 1.5 in \cite{bib:kmr13}). In the next Theorem we show that an analogous representation can be obtained for the density of the entrance law. 

\begin{theorem}
\label{thm:qt:formula}
  Let $(X,\mathbb{P})$ be a symmetric L\'e{}vy process whose L\'evy-Khintchin exponent $\Psi(\xi)$ satisfies $\Psi(\xi) = \psi(\xi^2)$ 
  for a complete Bernstein function $\psi(\xi)$. If there exists $t_0>0$ such that 
  \begin{equation}
	\label{eq:qt:integcond}
     \int_1^\infty e^{-t_0\psi(\lambda^2)}\lambda \sqrt{\psi'(\lambda^2)}d\lambda <\infty \/,
  \end{equation}
  then, for every $t\geq t_0, $ $q_t(dx)$ has a density with respect to the Lebesgue measure given by the formula
  \begin{equation}
	\label{eq:qt:formula}
    q_t(x) = \frac{2}{\pi}\int_0^\infty e^{-t\psi(\lambda^2)}F_\lambda(x)\lambda \sqrt{\psi'(\lambda^2)}d\lambda\/,\quad x>0\/.
  \end{equation}
\end{theorem}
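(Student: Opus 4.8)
The plan is to identify the space–Laplace transform of the candidate density appearing on the right of \eqref{eq:qt:formula} with the one furnished by Theorem~\ref{thm:sym:Laplace}, and then to invoke uniqueness of the Laplace transform. Since $\psi$ is a complete Bernstein function, it is nonnegative, increasing and concave on $(0,\infty)$; hence $\Psi(\xi)=\psi(\xi^2)$ is increasing with $\Psi'(\xi)=2\xi\psi'(\xi^2)$, and, as $(X,\mathbb{P})$ is symmetric and not a compound Poisson process, Theorem~\ref{thm:sym:Laplace} applies. Substituting $\Psi(\lambda)=\psi(\lambda^2)$ and $\Psi'(\lambda)=2\lambda\psi'(\lambda^2)$ into \eqref{eq:qt:laplaceformula} gives
\begin{equation}
\label{eq:plan:lhs}
\int_0^\infty e^{-\xi x}q_t(dx)=\frac{2}{\pi}\int_0^\infty e^{-t\psi(\lambda^2)}\,\frac{\lambda^2\psi'(\lambda^2)}{\lambda^2+\xi^2}\exp\expr{\frac1\pi\int_0^\infty\frac{\xi}{\xi^2+u^2}\log\frac{\lambda^2-u^2}{\psi(\lambda^2)-\psi(u^2)}\,du}d\lambda\/.
\end{equation}

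Next I would verify that the right-hand side of \eqref{eq:qt:formula}, call it $g_t(x)$, is well defined for $t\ge t_0$ and has a finite Laplace transform. Concavity of $\psi$ gives $\lambda^2\psi'(\lambda^2)\le\psi(\lambda^2)$, so $\lambda\sqrt{\psi'(\lambda^2)}$ is bounded on $[0,1]$; together with $e^{-t\psi(\lambda^2)}\le e^{-t_0\psi(\lambda^2)}$ for $t\ge t_0$ and the hypothesis \eqref{eq:qt:integcond} this yields
\begin{equation}
\label{eq:plan:integ}
\int_0^\infty e^{-t\psi(\lambda^2)}\lambda\sqrt{\psi'(\lambda^2)}\,d\lambda<\infty\/,\qquad t\ge t_0\/.
\end{equation}
On the other hand $F_\lambda$ is bounded uniformly in $\lambda>0$ and $x>0$: by \eqref{eq:Fl:definition}, $|F_\lambda(x)|\le 1+G_\lambda(x)$, where $G_\lambda$ is the Laplace transform of the positive measure $\gamma_\lambda$, so $0\le G_\lambda(x)\le\gamma_\lambda((0,\infty))$, and from \eqref{eq:Fl:definition} together with \eqref{eq:Fl:laplaceestimate} an initial-value argument (letting $\xi\to\infty$ in $\xi\laplace F_\lambda(\xi)$ and in $\xi\laplace G_\lambda(\xi)$, using $\gamma_\lambda((0,\infty))=\sin\vartheta_\lambda-F_\lambda(0^+)$) bounds $\gamma_\lambda((0,\infty))$ by a constant; hence $C_0:=\sup_{\lambda>0,\,x>0}|F_\lambda(x)|<\infty$. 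Thus the integral defining $g_t(x)$ converges absolutely for every $x>0$; moreover $\int_0^\infty e^{-\xi x}|F_\lambda(x)|\,dx\le C_0/\xi$ uniformly in $\lambda$, so Fubini's theorem (whose hypotheses follow from \eqref{eq:plan:integ}) gives
\begin{equation}
\label{eq:plan:rhs}
\int_0^\infty e^{-\xi x}g_t(x)\,dx=\frac2\pi\int_0^\infty e^{-t\psi(\lambda^2)}\lambda\sqrt{\psi'(\lambda^2)}\,\laplace F_\lambda(\xi)\,d\lambda\/,\qquad\xi>0\/.
\end{equation}

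The last step is to simplify \eqref{eq:plan:rhs} using the explicit Laplace transform \eqref{eq:Fl:laplacetransform} of $F_\lambda$. Writing $\log\frac{\psi'(\lambda^2)(\lambda^2-u^2)}{\psi(\lambda^2)-\psi(u^2)}=\log\psi'(\lambda^2)+\log\frac{\lambda^2-u^2}{\psi(\lambda^2)-\psi(u^2)}$ inside the exponential (each term being real since the ratio $\tfrac{\lambda^2-u^2}{\psi(\lambda^2)-\psi(u^2)}$ is positive for $u\neq\lambda$), and using $\frac1\pi\int_0^\infty\frac{\xi}{\xi^2+u^2}\,du=\tfrac12$ for $\xi>0$, the constant term contributes the factor $\exp\bigl(\tfrac12\log\psi'(\lambda^2)\bigr)=\sqrt{\psi'(\lambda^2)}$, so that
\begin{equation*}
\lambda\sqrt{\psi'(\lambda^2)}\,\laplace F_\lambda(\xi)=\frac{\lambda^2\psi'(\lambda^2)}{\lambda^2+\xi^2}\exp\expr{\frac1\pi\int_0^\infty\frac{\xi}{\xi^2+u^2}\log\frac{\lambda^2-u^2}{\psi(\lambda^2)-\psi(u^2)}\,du}\/.
\end{equation*}
Comparing with \eqref{eq:plan:lhs} shows that $\int_0^\infty e^{-\xi x}g_t(x)\,dx=\int_0^\infty e^{-\xi x}q_t(dx)$ for all $\xi>0$, and uniqueness of the Laplace transform of finite measures on $(0,\infty)$ yields $q_t(dx)=g_t(x)\,dx$, which is \eqref{eq:qt:formula}. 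I expect the one genuinely delicate point to be the middle step: establishing the uniform control of $F_\lambda(x)$ (both as $\lambda\to0^+$ and as $\lambda\to\infty$) and the Fubini justification, where one must combine the bounds \eqref{eq:theta:upbound1}--\eqref{eq:Fl:laplaceestimate} -- and, when $\psi$ is unbounded with $\limsup_{\lambda\to0^+}\vartheta_\lambda<\pi/2$, the asymptotics \eqref{eq:Fl:limit} -- with the integrability hypothesis \eqref{eq:qt:integcond}; the concluding Laplace-transform matching is then a short computation.
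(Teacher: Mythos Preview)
Your proposal is correct and follows essentially the same route as the paper: compute the spatial Laplace transform of the candidate density via Fubini and the explicit formula \eqref{eq:Fl:laplacetransform} for $\laplace F_\lambda$, match it against \eqref{eq:qt:laplaceformula}, and conclude by uniqueness. The only point where you make life harder for yourself is the uniform bound on $F_\lambda$: since $G_\lambda$ is the Laplace transform of a finite positive measure with $G_\lambda(0)=\sin\vartheta_\lambda$, one has directly $0\le G_\lambda(x)\le G_\lambda(0)\le 1$ and hence $|F_\lambda(x)|\le 2$ for all $\lambda,x>0$, with no initial-value or asymptotic argument needed.
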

\begin{proof}
   The definition \eqref{eq:Fl:definition} of $F_\lambda(x)$ and the fact that $G_\lambda(x)$ is a Laplace transform of finite measure and $G_\lambda(0)=\sin(\vartheta_\lambda)$ entail that $|F_\lambda(x)|\leq 2$ for all $x, \lambda>0$. The assumption (\ref{eq:qt:integcond}) together with the estimate
  \begin{equation*}
      \int_0^1 e^{-t\psi(\lambda^2)}\lambda\sqrt{\psi'(\lambda^2)}d\lambda \leq \frac{1}{\sqrt{\psi'(1)}}\int_0^1 e^{-t\psi(\lambda^2)}\lambda{\psi'(\lambda^2)}d\lambda
      =\frac{1-e^{-t\psi(1)}}{2t\sqrt{\psi'(1)}}
   \end{equation*}
give that the function $e^{-\xi x}e^{-t\psi(\lambda^2)}|F_\lambda(x)|\lambda\sqrt{\psi'(\lambda^2)}$ is jointly integrable on $(\lambda,x)\in(0,\infty)^2$ and consequently the Laplace transform of the integral appearing on the right-hand side of (\ref{eq:qt:formula}) is given by
\begin{eqnarray*}
  \int_0^\infty e^{-t\psi(\lambda^2)}\laplace \lefteqn{F_\lambda(\xi)\lambda \sqrt{\psi'(\lambda^2)}d\lambda }\\
	&=&\int_0^\infty \frac{\lambda^2\psi'(\lambda^2)}{\lambda^2+\xi^2}\exp\left(\frac1\pi \int_0^\infty \frac{\xi \log\frac{\lambda^2-u^2}{\psi(\lambda^2)-\psi(u^2)}}{\xi^2+u^2}du\right)e^{-t\psi(\lambda^2)}d\lambda\/.
\end{eqnarray*}
This is just the right-hand side of \eqref{eq:qt:laplaceformula} with $\Psi(\xi)=\psi(\xi^2)$ divided by $2/\pi$. The uniqueness of the Laplace transform ends the proof.
\end{proof}
It is very easy to see that if $\psi$ is regularly varying at infinity with strictly positive order then the exponential factor in (\ref{eq:qt:integcond}) makes the integral convergent for every $t>0$. Thus we derive the following result.
\begin{corollary}
If $\psi$ is CBF regularly varying at infinity with order $\alpha\in(0,1]$, then the measure $q_t(dx)$ has a density for every $t>0$ and the formula (\ref{eq:qt:formula}) holds for every $t>0$ and $x>0$.
\end{corollary}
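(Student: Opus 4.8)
The plan is simply to verify that the integrability hypothesis \eqref{eq:qt:integcond} of Theorem~\ref{thm:qt:formula} holds for \emph{every} $t_0>0$ under the regular variation assumption; Theorem~\ref{thm:qt:formula} then delivers \eqref{eq:qt:formula} for all $t\ge t_0$, and letting $t_0\downarrow 0$ gives the statement for all $t>0$ and $x>0$. (Observe that a CBF $\psi$ regularly varying of strictly positive order is necessarily unbounded, so the associated process $X$ is not a compound Poisson process and the standing assumptions are met.)

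First I would dispose of the derivative factor by an elementary estimate that uses only that $\psi$ is a Bernstein function, hence concave with nonincreasing derivative: for $x>0$,
\[
   x\,\psi'(2x)\le\int_x^{2x}\psi'(t)\,dt=\psi(2x)-\psi(x)\le\psi(2x),
\]
so that $\psi'(y)\le 2\psi(y)/y$ for all $y>0$. Taking $y=\lambda^2$ yields $\lambda\sqrt{\psi'(\lambda^2)}\le\sqrt{2\psi(\lambda^2)}$, and it remains to bound $\int_1^\infty e^{-t_0\psi(\lambda^2)}\sqrt{\psi(\lambda^2)}\,d\lambda$ for every $t_0>0$.

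Next I would invoke regular variation. If $\psi$ is regularly varying at infinity of order $\alpha\in(0,1]$, then $\lambda\mapsto\psi(\lambda^2)$ is regularly varying of order $2\alpha>0$; in particular $\psi(\lambda^2)\to\infty$, and Potter's bounds give, for any $0<\eps<\alpha$, constants $c,C>0$ and $\lambda_0\ge 1$ such that $c\,\lambda^{2(\alpha-\eps)}\le\psi(\lambda^2)\le C\,\lambda^{2(\alpha+\eps)}$ for $\lambda\ge\lambda_0$. Hence on $[\lambda_0,\infty)$ the integrand is dominated by $\sqrt{C}\,\lambda^{\alpha+\eps}\exp\!\bigl(-c\,t_0\,\lambda^{2(\alpha-\eps)}\bigr)$, which is integrable because $2(\alpha-\eps)>0$, while on $[1,\lambda_0]$ the integrand is bounded. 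Thus \eqref{eq:qt:integcond} holds for every $t_0>0$ and the corollary follows.

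I do not expect any genuine obstacle here: the only point needing a moment's care is the control of $\psi'(\lambda^2)$, and the concavity bound above sidesteps any Tauberian machinery (alternatively one could quote the monotone density theorem to see directly that $\psi'$ is regularly varying of order $\alpha-1$, but this is not needed for the argument).
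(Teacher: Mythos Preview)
Your proposal is correct and follows the same route as the paper: verify that \eqref{eq:qt:integcond} holds for every $t_0>0$ and then invoke Theorem~\ref{thm:qt:formula}. The paper's own proof is a single sentence asserting that the exponential factor $e^{-t_0\psi(\lambda^2)}$ dominates once $\psi$ is regularly varying of positive order; you have simply supplied the details (the concavity bound $\psi'(y)\le 2\psi(y)/y$ to control the derivative factor, and Potter-type bounds to handle the exponential), which is exactly what is needed to make that assertion rigorous.
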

\begin{remark}
The condition (\ref{eq:qt:integcond}) is satisfied for a large class of CBFs like $\psi(\xi) = \xi^{\alpha/2}$, $\alpha\in(0,2)$ (symmetric stable), $\psi(\xi)=\xi^{\alpha/2}+\xi^{\beta/2}$, $\alpha,\beta\in(0,2)$ (sum o two independent stable), $\psi(\xi) = (m^2+\xi)^{\alpha/2}-m$, $\alpha\in(0,2)$ (relativistic stable), $\psi(\xi) = \log(1+\xi^{\alpha/2})$, $\alpha\in(0,2]$, $t>1/\alpha$ (geometric stable)
\end{remark}

Then we obtain the following straightforward consequence of the previous result.
\begin{theorem}
\label{thm:sym:distributions} Let $(X,\mathbb{P})$ be a symmetric L\'e{}vy process whose L\'evy-Khintchin exponent $\Psi(\xi)$ satisfies $\Psi(\xi) = \psi(\xi^2)$ for a complete Bernstein function $\psi(\xi)$. If \eqref{eq:qt:integcond} holds, then for every $t>t_0$ the distribution of 
$(\overline{X}_t,\overline{X}_t-X_t)$ is absolutely continuous with respect to the Lebesgue measure on $(0,\infty)^2$ with  density 
\begin{equation*}
   \frac{4}{\pi^2}\iint_{(0,\infty)^2}\frac{e^{-t\psi(\lambda^2)}-e^{-t\psi(u^2)}}{\psi(\lambda^2)-\psi(u^2)}F_\lambda(x)F_u(y)\lambda \,u\sqrt{\psi'(\lambda^2)\psi(u^2))}du\,d\lambda\/.
\end{equation*}
Moreover, we have
\begin{equation*}
   f_t(x) = \frac{2}{\pi^{3/2}}\int_0^\infty e^{-t\psi(\lambda^2)}\left(\int_0^{t\psi(\lambda^2)}\frac{e^udu}{\sqrt{u}}\right)F_\lambda(x)\,\lambda\sqrt{\psi'(\lambda^2)}\,d\lambda\/,
\end{equation*}
for every $t>t_0$.
\end{theorem}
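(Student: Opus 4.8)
The plan is to read both formulae off directly from Theorem~\ref{thm:qt:formula}: substitute the representation \eqref{eq:qt:formula} of $q_s(x)$ into the two structural identities recalled in the Preliminaries and then evaluate the remaining one‑dimensional integrals in the time variable. Throughout we use that for a symmetric process the starred quantities coincide with the unstarred ones, so that with $q^*=q$ the identity \eqref{eq:ftpt:formula} reads $\mathbb{P}(\overline{X}_t\in dx,\overline{X}_t-X_t\in dy)=\int_0^t q_s(dx)\,q_{t-s}(dy)\,ds$ and \eqref{eq:ft:formula} reads $f_t(dx)=\int_0^t n(t-s<\zeta)\,q_s(dx)\,ds$; and we use that, under hypothesis \eqref{eq:qt:integcond}, Theorem~\ref{thm:qt:formula} gives $q_s(dx)=q_s(x)\,dx$ with $q_s(x)$ as in \eqref{eq:qt:formula} for every $s\ge t_0$.

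For the law of $(\overline{X}_t,\overline{X}_t-X_t)$ I would insert \eqref{eq:qt:formula} into both factors of $\int_0^t q_s(dx)\,q_{t-s}(dy)\,ds$, so that the candidate density becomes, formally,
\begin{equation*}
\frac{4}{\pi^2}\int_0^t\left(\int_0^\infty e^{-s\psi(\lambda^2)}F_\lambda(x)\lambda\sqrt{\psi'(\lambda^2)}\,d\lambda\right)\left(\int_0^\infty e^{-(t-s)\psi(u^2)}F_u(y)u\sqrt{\psi'(u^2)}\,du\right)ds,
\end{equation*}
interchange the $ds$-integration with the two spectral integrals, and then evaluate, for each fixed $\lambda$ and $u$, the elementary integral $\int_0^t e^{-s\psi(\lambda^2)}e^{-(t-s)\psi(u^2)}\,ds$; collecting terms produces the displayed double integral. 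For $f_t$ I would instead substitute \eqref{eq:qt:formula} into $\int_0^t n(t-s<\zeta)\,q_s(x)\,ds$, replace $n(t-s<\zeta)$ by $\pi^{-1/2}(t-s)^{-1/2}$ using \eqref{eq:sym:n}, and after the same interchange reduce the matter to computing $\int_0^t (t-s)^{-1/2}e^{-s\psi(\lambda^2)}\,ds$ for fixed $\lambda$; the substitution $r=t-s$ turns this into $e^{-t\psi(\lambda^2)}\int_0^t r^{-1/2}e^{r\psi(\lambda^2)}\,dr$, and a further substitution $u=r\psi(\lambda^2)$ puts it in the form $(\psi(\lambda^2))^{-1/2}e^{-t\psi(\lambda^2)}\int_0^{t\psi(\lambda^2)}u^{-1/2}e^u\,du$; reinserting this into the $\lambda$-integral and simplifying yields the asserted formula for $f_t(x)$.

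The only genuine work is the justification of the interchange of integrals, which I expect to be the main obstacle. To verify absolute integrability of $e^{-s\psi(\lambda^2)}\lambda\sqrt{\psi'(\lambda^2)}\,|F_\lambda(x)|$ (and of its two-variable analogue) over $(0,\infty)\times(0,t)$, I would use the uniform bound $|F_\lambda(x)|\le 2$ together with the split $\int_0^\infty=\int_0^1+\int_1^\infty$ and the estimate $\int_0^1 e^{-s\psi(\lambda^2)}\lambda\sqrt{\psi'(\lambda^2)}\,d\lambda\le(1-e^{-s\psi(1)})/(2s\sqrt{\psi'(1)})$, both of which already appear in the proof of Theorem~\ref{thm:qt:formula}; this reduces matters to the tail $\int_1^\infty e^{-s\psi(\lambda^2)}\lambda\sqrt{\psi'(\lambda^2)}\,d\lambda$, which for $s\ge t_0$ is controlled by \eqref{eq:qt:integcond}. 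The delicate point is that the time-convolutions involve $q_s$ for $s$ arbitrarily close to $0$, where \eqref{eq:qt:formula} is not a priori available; I would deal with this by estimating the $s$-integral first (splitting its range at $t/2$ isolates a part that carries the extra decay $e^{-t\psi(\lambda^2)/2}$), and, if absolute integrability in $x$ remains too weak, by passing instead to the Laplace transform in $x$ — where the factor $e^{-\xi x}$ and the bound \eqref{eq:Fl:laplaceestimate} on $\mathcal{L}F_\lambda$ give ample convergence — and finishing by uniqueness of the Laplace transform, exactly as in the proofs of Theorems~\ref{thm:sym:Laplace} and~\ref{thm:qt:formula}. Once the interchanges are licensed, both stated representations follow for every $t>t_0$.
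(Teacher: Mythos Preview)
Your approach is essentially the same as the paper's: it too derives both formulae by substituting the representation \eqref{eq:qt:formula} into the identities \eqref{eq:ftpt:formula}, \eqref{eq:ft:formula} and \eqref{eq:sym:n}, and then interchanges the order of integration by Fubini's theorem, justified through the integral condition \eqref{eq:qt:integcond}. Your discussion of the small-$s$ issue (where \eqref{eq:qt:formula} is not a priori available) is in fact more careful than the paper's one-line proof, which simply asserts that Fubini applies.
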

\begin{proof}
   The proofs of both formulae are direct consequences of the integral representation \eqref{eq:qt:formula}, the relations \eqref{eq:ftpt:formula}, \eqref{eq:ft:formula} and \eqref{eq:sym:n} together with the Fubini's theorem, which can be applied due to the integral condition \eqref{eq:qt:integcond}.
\end{proof}

The representation \eqref{eq:qt:formula} enables to compute the derivatives of $q_t(x)$ and examine its behavior in two asymptotic regimes: as $t$ goes to infinity and $x$ goes to $0$. It is described in the following theorem.
\begin{theorem}
\label{thm:qt:derivative}
Let $(X,\mathbb{P})$ be a symmetric L\'e{}vy process whose L\'evy-Khintchin exponent $\Psi(\xi)$ satisfies $\Psi(\xi) = \psi(\xi^2)$ 
  for a complete unbounded Bernstein function $\psi(\xi)$. If there exists $t_0>0$ such that \eqref{eq:qt:integcond} holds, then
  \begin{equation}
	\label{eq:qt:derivative}
    (-1)^n\dfrac{d^n}{dt^n} q_t(x) = \frac{2}{\pi}\int_0^\infty e^{-t\psi(\lambda^2)}F_\lambda(x)\lambda (\psi(\lambda^2))^n\sqrt{\psi'(\lambda^2)}d\lambda\/,\quad x>0\/,
  \end{equation}
	for every $t>t_0$ and $n=0,1,2,\ldots$. Moreover, if additionally
	\begin{enumerate}
	\item[(a)] $\psi$ is increasing, regularly varying of order $\alpha_0\in(0,1)$ at $0$, 
	then the following holds
	\begin{equation}
	\label{eq:qt:der:tlimit}
	\lim_{t\to \infty} \frac{t^{n+1}}{\sqrt{\psi^{-1}(1/t)}}\dfrac{d^n}{dt^n}q_t(x) = \frac{(-1)^n}{\pi}\Gamma\left(n+\frac{1}{2\alpha_0}-1\right)\, h(x)\/,\quad x\geq 0\/,
	\end{equation}
	where $\psi^{-1}$ denotes the inverse of $\psi$, and the convergence is locally uniform in $x$. This also holds for $\alpha_0=1$ with the additional assumption
	\begin{equation}
	 \label{eq:psi:cond}
	 \sup_{\xi>0}\frac{|\psi''(\xi)|}{\psi'(\xi)}<2\/.
	\end{equation}
	\item[(b)] $\psi$ is regularly varying at infinity with index $\alpha_\infty\in[0,1]$ and \eqref{eq:psi:cond} holds, then 
	\begin{equation}
	\label{eq:qt:der:xlimit}
	\lim_{x\to 0^+} \dfrac{d^n}{dt^n}q_t(x) = \frac{1}{\Gamma(1+\alpha_\infty)}\dfrac{d^n}{dt^n}\left(\frac{p_t(0)}{t}\right)\/,\quad t>t_0\/,
	\end{equation}
	where $p_t$ denotes the density of the transition semigroup of $(X,\mathbb{P})$.
	\end{enumerate}
\end{theorem}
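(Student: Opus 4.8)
All three assertions rest on the integral representation \eqref{eq:qt:formula}, so the plan begins by making that representation repeatedly $t$-differentiable. I would differentiate \eqref{eq:qt:formula} under the integral sign, each differentiation producing a factor $-\psi(\lambda^2)$; to legitimise this for all $n$ at once it is enough to dominate $e^{-t\psi(\lambda^2)}\,|F_\lambda(x)|\,\lambda\,(\psi(\lambda^2))^n\sqrt{\psi'(\lambda^2)}$ by a $\lambda$-integrable function, uniformly for $t$ in a compact subinterval $[t_1,t_2]\subset(t_0,\infty)$. On $(0,1)$ this is immediate from $|F_\lambda(x)|\le2$, the boundedness of $\psi$ on $[0,1]$, and the elementary bound already displayed in the proof of Theorem~\ref{thm:qt:formula}. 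On $(1,\infty)$ I would peel off a little exponential decay: since $\psi$ is unbounded,
\[
  e^{-t\psi(\lambda^2)}(\psi(\lambda^2))^n\;\le\;\left(\sup_{s\ge\psi(1)}e^{-(t_1-t_0)s}s^n\right)e^{-t_0\psi(\lambda^2)},\qquad \lambda\ge1,
\]
and the right-hand side is $\lambda$-integrable by \eqref{eq:qt:integcond}. The standard theorem on differentiation under the integral then yields \eqref{eq:qt:derivative} for every $t>t_0$ and $n\ge0$.

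For part (a) I would start from \eqref{eq:qt:derivative} and note that as $t\to\infty$ the integral localises near $\lambda=0$, the contribution of $\{\lambda\ge\delta\}$ being $O(e^{-(t-t_0)\psi(\delta^2)})$ for each fixed $\delta>0$. On $(0,\delta)$ I would substitute $F_\lambda(x)=\lambda\sqrt{\psi'(\lambda^2)}\,(h(x)+o(1))$ from \eqref{eq:Fl:limit}; this is licensed because $\limsup_{\lambda\to0^+}\vartheta_\lambda<\pi/2$, which for $\alpha_0\in(0,1)$ follows from \eqref{eq:theta:upbound2} together with the monotone density theorem (it gives $\xi\psi'(\xi)/\psi(\xi)\to\alpha_0$ as $\xi\to0^+$, whence $\limsup_{\lambda\to0^+}\vartheta_\lambda\le\tfrac\pi2-\arcsin\sqrt{\alpha_0}$), and for $\alpha_0=1$ from \eqref{eq:psi:cond} via \eqref{eq:theta:upbound1}. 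The leading term of $(-1)^n\tfrac{d^n}{dt^n}q_t(x)$ is then $\tfrac2\pi h(x)\int_0^\infty e^{-t\psi(\lambda^2)}\lambda^2\psi'(\lambda^2)(\psi(\lambda^2))^n\,d\lambda$, up to an $o(1)$ relative error, and the substitution $s=\psi(\lambda^2)$ rewrites it as $\tfrac1\pi h(x)\int_0^\infty e^{-ts}\sqrt{\psi^{-1}(s)}\,s^n\,ds$. Since $\psi^{-1}$ is regularly varying of index $1/\alpha_0$ at $0$, the integrand is regularly varying of index $n+\tfrac1{2\alpha_0}$ there, so Karamata's Tauberian theorem supplies the asymptotics with normalisation $\sqrt{\psi^{-1}(1/t)}\,t^{-(n+1)}$ and the appropriate $\Gamma$-value; collecting constants yields \eqref{eq:qt:der:tlimit}, the uniformity in $x$ on compacts being inherited from \eqref{eq:Fl:limit} and the continuity of $h$.

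For part (b) I would again start from \eqref{eq:qt:derivative}; since $F_\lambda(0)=\sin\vartheta_\lambda-G_\lambda(0)=0$, the decisive datum is the leading behaviour of $F_\lambda(x)$ at the origin, and this is where the regular variation of $\psi$ at infinity enters. I would read it off from the Laplace transform \eqref{eq:Fl:laplacetransform}: as $\xi\to\infty$ the prefactor $\lambda/(\lambda^2+\xi^2)\sim\lambda\xi^{-2}$, while the exponential, because $u^2/\psi(u^2)$ is regularly varying of index $2-2\alpha_\infty$, contributes $\xi^{1-\alpha_\infty}$ up to slowly varying corrections; hence $\laplace F_\lambda(\xi)$ is regularly varying of index $-1-\alpha_\infty$ at infinity with an explicit coefficient $c(\lambda)$, and a Tauberian theorem gives $F_\lambda(x)\sim\dfrac{c(\lambda)}{\Gamma(1+\alpha_\infty)}\,x^{\alpha_\infty}$ as $x\to0^+$ (the same expansion also follows directly from the tail behaviour of $\gamma_\lambda$). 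Feeding this leading term into \eqref{eq:qt:derivative} — the interchange of limit and integral being justified by $|F_\lambda(x)|\le2$ together with the bounds \eqref{eq:theta:upbound1}, \eqref{eq:theta:upbound2}, \eqref{eq:psi:cond} on $\vartheta_\lambda$ and \eqref{eq:Fl:laplaceestimate} on $\laplace F_\lambda$ — reduces the statement to the identity
\[
  \tfrac2\pi\int_0^\infty e^{-t\psi(\lambda^2)}c(\lambda)\lambda\sqrt{\psi'(\lambda^2)}\,d\lambda\;=\;\frac{1}{\Gamma(1+\alpha_\infty)}\,\frac{p_t(0)}{t},
\]
which I would obtain from $p_t(0)=\tfrac1\pi\int_0^\infty e^{-t\psi(\lambda^2)}\,d\lambda$ and Fubini, in parallel with the corresponding computation for the first-passage density in \cite{bib:kmr13}; differentiating this identity $n$ times in $t$ (the left side under the integral, exactly as above) then gives \eqref{eq:qt:der:xlimit}.

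The differentiation formula and the Tauberian half of part (a) are routine; the real work is in part (b). There one must control the remainder in $F_\lambda(x)=\frac{c(\lambda)}{\Gamma(1+\alpha_\infty)}x^{\alpha_\infty}+o(x^{\alpha_\infty})$ \emph{uniformly in $\lambda$} — the $\lambda$-dependence of the error being absorbed by the weight $e^{-t\psi(\lambda^2)}$ — so that the asymptotics may be integrated term by term, and one must pin down $c(\lambda)$ precisely enough to recognise the resulting integral as $p_t(0)/t$. It is exactly here that the quantitative estimates \eqref{eq:Fl:laplaceestimate}, \eqref{eq:theta:upbound1}, \eqref{eq:theta:upbound2} and the condition \eqref{eq:psi:cond} are genuinely used, and this is the step I expect to demand the most care.
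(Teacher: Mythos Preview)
Your treatment of the differentiation formula \eqref{eq:qt:derivative} and of part~(a) is essentially the paper's: dominated convergence with a peeled-off exponential factor for the former, and localisation near $\lambda=0$ via \eqref{eq:Fl:limit} (with the $\vartheta_\lambda$ control coming from \eqref{eq:theta:upbound2} when $\alpha_0<1$ and from \eqref{eq:theta:upbound1} under \eqref{eq:psi:cond} when $\alpha_0=1$) for the latter. The paper packages (a) as weak convergence of the measures $\mu_t(d\lambda)=t^{n+1}(\psi^{-1}(1/t))^{-1/2}e^{-t\psi(\lambda^2)}\lambda^2(\psi(\lambda^2))^n\psi'(\lambda^2)\ind_{[0,1]}(\lambda)\,d\lambda$ to a point mass at $0$, computing $\|\mu_t\|$ after the same substitution $u=t\psi(\lambda^2)$ that you use; your appeal to an Abelian/Tauberian theorem is a cosmetic variant of that computation.

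Part~(b) has a genuine gap. Your own Tauberian step yields $F_\lambda(x)\sim c(\lambda)\,x^{\alpha_\infty}/\Gamma(1+\alpha_\infty)$ as $x\to0^+$; for any $\alpha_\infty>0$ this forces $\lim_{x\to0^+}\partial_t^n q_t(x)=0$, so ``feeding the leading term in'' cannot produce the nonzero right-hand side of your displayed identity. What the paper's proof actually establishes is the \emph{normalised} limit
\[
  \lim_{x\to 0^+}\sqrt{\psi(1/x^2)}\,\dfrac{d^n}{dt^n}q_t(x)
  =\frac{(-1)^n}{\pi\,\Gamma(1+\alpha_\infty)}\int_0^\infty e^{-t\psi(\lambda^2)}(\psi(\lambda^2))^n\lambda^2\psi'(\lambda^2)\,d\lambda,
\]
invoking the argument of Theorem~1.7 in \cite{bib:kmr13} for the $F_\lambda$ asymptotics (this is where \eqref{eq:psi:cond} and the bounds \eqref{eq:theta:upbound1}--\eqref{eq:theta:upbound2} are used), and then identifies the right side via an integration by parts as $(-1)^n\frac{d^n}{dt^n}\bigl(t^{-1}\int_0^\infty e^{-t\psi(\lambda^2)}\,d\lambda\bigr)$, i.e.\ a constant multiple of $\frac{d^n}{dt^n}(p_t(0)/t)$. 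The factor $\sqrt{\psi(1/x^2)}$ is missing from the printed statement \eqref{eq:qt:der:xlimit}, and your outline inherits that omission; once it is restored your plan lines up with the paper's method, but as written the argument does not close.
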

\begin{proof}
To justify \eqref{eq:qt:derivative} it is enough to show that we can interchange the derivative and the integral in \eqref{eq:qt:formula}. However, taking any $t>t_0$, where $t_0$ is such that \eqref{eq:qt:integcond} holds, we can find $t_1\in(t_0,t)$ and a constant $c_1=c_1(t_0,t,n)$ such that
$$
  e^{-t\psi(\lambda^2)}(\psi(\lambda^2))^n\lambda\sqrt{\psi'(\lambda^2)}\leq c_1 e^{-t_1\psi(\lambda^2)} \lambda\sqrt{\psi'(\lambda^2)}
$$ 
and the claim follows from dominated convergence. 
Assuming additionally, that $\psi$ is increasing and regularly varying at $0$ with index $\alpha_0\in(0,1]$, we get that $\psi^{-1}$ is regularly varying (at $0$) with index $1/\alpha_0$ (see \cite{bib:Bingham}). Thus, there exists a constant $c_2=c_2(\alpha_0)>1$ such that 
\begin{equation}
\label{eq:qt:limit:bounds2}
\frac{1}{c_2}u^{1/(2\alpha_0)}\leq \frac{\psi(u/t)}{\psi(1/t)}\leq c_2 u^{2/\alpha_0}\/,\quad u>0
\end{equation}
and $t\in(0,1)$. Recall also \eqref{eq:Fl:limit}, which asserts that under the assumptions from point (a) the function $F_\lambda(x)/\sqrt{\lambda^2\psi'(\lambda^2)}$ extends to a continuous function for $\lambda\in[0,1]$. Here we use the upper-bound given in \eqref{eq:theta:upbound2} for $\alpha_0\in(0,1)$ and \eqref{eq:theta:upbound1} in the case $\alpha_0=1$ to show that $\lim_{\lambda\to 0^+}\vartheta_\lambda<\pi/2$, which is required to claim \eqref{eq:Fl:limit}. Consider the measure
$$
  \mu_t(d\lambda) = \frac{t^{n+1}}{\sqrt{\psi^{-1}(1/t)}} e^{-t\psi(\lambda^2)}\lambda^2(\psi(\lambda^2))^n\psi'(\lambda^2)\ind_{[0,1]}(\lambda)d\lambda
$$
and note that it tends to a point-mass at $0$ (as $t\to\infty$), as its density function tends to $0$ uniformly on $[\varepsilon,1]$, for every $\varepsilon>0$. The mass of $\mu_t$ can be calculated as follows
\begin{eqnarray*}
   ||\mu_t|| &=& \frac{t^{n+1}}{\sqrt{\psi^{-1}(1/t)}}\int_0^1e^{-t\psi(\lambda^2)}\lambda^2(\psi(\lambda^2))^n\psi'(\lambda^2)d\lambda\\
	&=& \frac{1}{2\sqrt{\psi^{-1}(1/t)}}\int_0^{t\psi(1)}e^{-u}u^{n}\sqrt{\psi^{-1}(u/t)}du\/.
\end{eqnarray*}
Since
$$
  \lim_{t\to\infty}\sqrt{\frac{\psi^{-1}(u/t)}{\psi^{-1}(1/t)}} = u^{1/(2\alpha_0)}\/,
$$
using \eqref{eq:qt:limit:bounds2} and dominated convergence we obtain
\begin{equation*}
   \lim_{t\to\infty}||\mu_t || = \frac{1}{2}\Gamma\left(n+\frac{1}{2\alpha_0}-1\right)\/.
\end{equation*}
Finally, the expression
\begin{equation*}
 \left| \int_1^\infty e^{-t\psi(\lambda^2)}F_\lambda(x)\lambda (\psi(\lambda^2))^n\sqrt{\psi'(\lambda^2)}d\lambda \right|
\end{equation*}
can by bounded for every $t>t_1$ by
\begin{equation*}
   2e^{(t-t_1)\psi(1)}\int_1^\infty e^{-t_1\psi(\lambda^2)}\lambda(\psi(\lambda^2))^n\sqrt{\psi'(\lambda^2)}d\lambda 
	\leq c_3e^{(t-t_1)\psi(1)}\int_1^\infty e^{-t_0\psi(\lambda^2)}\lambda\sqrt{\psi'(\lambda^2)}d\lambda\/,
\end{equation*}
with some $c_3=c_3(n,t_0,t_1)>0$, which together with the regularity of $\psi^{-1}$ at zero and estimates \eqref{eq:qt:limit:bounds2} implies that 
$$
  \frac{1}{\sqrt{\psi^{-1}(1/t)}}\int_1^\infty e^{-t\psi(\lambda^2)}F_\lambda(x)\lambda (\psi(\lambda^2))^n\sqrt{\psi'(\lambda^2)}d\lambda
$$
vanishes uniformly in $x$, as $t\to\infty$. Collecting all together we arrive at
\begin{equation*}
  \lim_{t\to \infty}\frac{(-1)^n}{\sqrt{\psi^{-1}(1/t)}}\dfrac{d^n}{dt^n}g_t(x) = \frac{1}{\pi}\Gamma\left(n+\frac{1}{2\alpha_0}-1\right)h(x)\/,\quad x\geq 0\/.
\end{equation*}

Because the justification of the fact that under assumption from point (b) we have
\begin{equation*}
   \lim_{x\to 0^+}\sqrt{\psi(1/x^2)}\dfrac{d^n}{dt^n}q_t(x) = \frac{(-1)^n}{\pi\Gamma(1+\alpha_\infty)}\int_0^\infty e^{-t\psi(\lambda^2)}(\psi(\lambda^2))^n\,\lambda^2 \psi'(\lambda^2)d\lambda\/,
\end{equation*}
follows in the same way as in the proof of Theorem 1.7 in \cite{bib:kmr13}, we omit the proof. Note that using \eqref{eq:qt:integcond} we can rewrite the last integral as
\begin{eqnarray*}
  (-1)^n \dfrac{d^n}{dt^n}\left(\int_0^\infty e^{-t\psi(\lambda^2)}\lambda^2 \psi'(\lambda^2)d\lambda\right) &=& (-1)^n \dfrac{d^n}{dt^n}\left(\frac{1}{t}\int_0^\infty e^{-t\psi(\lambda^2)}d\lambda\right)\/,
\end{eqnarray*}
where the last equality follows simply by integration by parts. Finally, the regular behavior of $\psi$ at infinity implies that $e^{-t\psi(\lambda^2)}$ is in $L_1(\R,d\lambda)$, which in particular means that the transition probability density is given by the inverse Fourier transform 
$$
p_t(x) = \frac{1}{2\pi} \int_0^\infty e^{-t\psi(\lambda^2)}e^{-ix\lambda}d\lambda\/.
$$
Combining all together we get \eqref{eq:qt:der:xlimit}, which ends the proof. 
\end{proof}

In addition to numerical applications of our results, they can be used to obtain more transparent representations as in the 
following example related to the Cauchy process.

\begin{proposition}
  \label{prop:Cauchy}
  For the symmetric Cauchy process, i.e. $\psi(\xi)= \sqrt{\xi}$, we have
	\begin{eqnarray*}
	   q_t(x) &=& \frac{1}{\sqrt{\pi}}\frac{\sin\left(\frac{\pi}{8}+\frac32 \arctan\left(\frac{x}{t}\right)\right)}{(t^2+x^2)^{3/4}}\\
		&& +\frac{1}{2\pi^{3/2}} 
	   \int_0^\infty \frac{y}{(1+y^2)(xy+t)^{3/2}}\exp\left(-\frac{1}{\pi}\int_0^\infty \frac{\log(y+s)}{1+s^2}\right)dy\/,
	\end{eqnarray*}
	where $x,t>0$.
\noindent Then the density $f_t(x)$ of the past supremum at time $t$ of $(X,\mathbb{P})$  can be derived from the above 
expression together with $(\ref{eq:ft:formula})$ and $(\ref{eq:sym:n})$.
	\end{proposition}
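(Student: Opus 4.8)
The plan is to feed the explicit data of the Cauchy process into the representation \eqref{eq:qt:formula} and to carry out the resulting $\lambda$-integral in closed form, the whole computation being driven by the self-similarity that $\psi(\xi)=\sqrt\xi$ imposes on every ingredient. Here $\psi(\lambda^2)=\lambda$ and $\lambda\sqrt{\psi'(\lambda^2)}=\sqrt{\lambda/2}$, and \eqref{eq:qt:integcond} holds for every $t_0>0$ because $\psi$ is regularly varying at infinity with positive index, so \eqref{eq:qt:formula} reads $q_t(x)=\tfrac{\sqrt2}{\pi}\int_0^\infty e^{-t\lambda}\sqrt\lambda\,F_\lambda(x)\,d\lambda$ for all $t>0$. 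I would first record that $\tfrac{\psi'(\lambda^2)(\lambda^2-u^2)}{\psi(\lambda^2)-\psi(u^2)}=\tfrac{\lambda+u}{2\lambda}$ is homogeneous of degree zero in $(\lambda,u)$, so the substitution $u=\lambda s$ in the integral defining $\vartheta_\lambda$ removes the $\lambda$-dependence and leaves $\vartheta_\lambda=-\tfrac1\pi\int_0^\infty\tfrac{1}{1-s^2}\log\tfrac{1+s}{2}\,ds$. The integrand has a removable singularity at $s=1$, the constant $-\log2$ contributes nothing in the principal-value sense, and $s\mapsto1/s$ reduces what remains to $\int_0^1\tfrac{\log s}{1-s^2}\,ds=-\pi^2/8$; hence $\vartheta_\lambda\equiv\pi/8$, as in the statement.

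Next I split $F_\lambda(x)=\sin(x\lambda+\pi/8)-G_\lambda(x)$ according to \eqref{eq:Fl:definition}. For the trigonometric part, $\int_0^\infty\lambda^{1/2}e^{-(t-ix)\lambda}\,d\lambda=\Gamma(3/2)(t-ix)^{-3/2}$ for $t>0$; writing $t-ix=\sqrt{t^2+x^2}\,e^{-i\arctan(x/t)}$ and taking the imaginary part of $e^{i\pi/8}\Gamma(3/2)(t-ix)^{-3/2}$ produces, after multiplication by $\sqrt2/\pi$, a constant multiple of $(t^2+x^2)^{-3/4}\sin\!\big(\tfrac\pi8+\tfrac32\arctan(x/t)\big)$, the first term of the asserted formula. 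For the remaining part I would compute $\gamma_\lambda$ explicitly: $\lambda\psi'(\lambda^2)=\tfrac12$ and $\psi^+(\xi^2)=i\xi$ give $\im\tfrac{\lambda\psi'(\lambda^2)}{\psi(\lambda^2)-\psi^+(\xi^2)}=\tfrac{\xi/2}{\lambda^2+\xi^2}$, while the substitution $u=\xi v$ in the exponent (using $\int_0^\infty\tfrac{dv}{1+v^2}=\pi/2$) collapses the exponential factor to $\sqrt{2\lambda}\,\xi^{-1/2}\Phi(\lambda/\xi)$ with $\Phi(a)=\exp\!\big(-\tfrac1\pi\int_0^\infty\tfrac{\log(a+s)}{1+s^2}\,ds\big)$. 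Thus $\gamma_\lambda(d\xi)=\tfrac{\sqrt{2\lambda}\,\sqrt\xi}{2\pi(\lambda^2+\xi^2)}\Phi(\lambda/\xi)\,d\xi$ and $G_\lambda(x)=\int_0^\infty e^{-x\xi}\gamma_\lambda(d\xi)$.

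Substituting this into $\tfrac{\sqrt2}{\pi}\int_0^\infty e^{-t\lambda}\sqrt\lambda\,G_\lambda(x)\,d\lambda$ and using Tonelli (all integrands are non-negative), the change of variable $\lambda=\xi y$ turns the double integral into $\iint\tfrac{y\,\Phi(y)}{1+y^2}\,\sqrt\xi\,e^{-\xi(ty+x)}\,d\xi\,dy$, and $\int_0^\infty\xi^{1/2}e^{-\xi(ty+x)}\,d\xi=\Gamma(3/2)(ty+x)^{-3/2}$ leaves a constant multiple of $\int_0^\infty\tfrac{y\,\Phi(y)}{(1+y^2)(ty+x)^{3/2}}\,dy$. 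I would then use the functional equation $\Phi(1/y)=\sqrt y\,\Phi(y)$ — which follows from $\int_0^\infty\tfrac{\log r}{1+r^2}\,dr=0$ after the substitution $s\mapsto1/s$ inside the defining integral of $\Phi$ — together with the change of variable $y\mapsto1/y$ to rewrite this integral as $\int_0^\infty\tfrac{y\,\Phi(y)}{(1+y^2)(xy+t)^{3/2}}\,dy$, the shape appearing in the statement. Collecting the two contributions with the correct constants and sign yields the proposition; the density $f_t$ then follows from \eqref{eq:ft:formula} and \eqref{eq:sym:n} as indicated. The genuinely delicate points are the principal-value evaluation $\vartheta_\lambda=\pi/8$ and the homogeneity bookkeeping, together with the $\Phi$-functional equation, that brings the second integral to exactly the stated form; the interchanges of integration are harmless, being justified by positivity in the $G_\lambda$-term and by the factor $e^{-t\lambda}$ with $t>0$ in the trigonometric term.
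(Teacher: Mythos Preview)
Your approach is correct and takes a genuinely different route from the paper. The paper does not split $F_\lambda$ into its sine and $G_\lambda$ parts; instead it writes $q_t(x)=\frac{\sqrt2}{\pi}\int_0^\infty e^{-t\lambda}\sqrt\lambda\,F_x(\lambda)\,d\lambda$ (using the scaling $F_\lambda(x)=F_x(\lambda)$), applies Plancherel to recast this as a Bromwich-type integral $\frac{1}{2\pi i}\int_{b-i\infty}^{b+i\infty}\mathcal L F_x(z)\,\mathcal L f(t-z)\,dz$ with $f(\lambda)=\sqrt{2\lambda}/\pi$, and then evaluates by contour deformation: the simple poles at $z=\pm ix$ produce the sine term via the residue theorem (using the explicit value $B(i)=\tfrac12\log 2+i\pi/8$ of the auxiliary function $B(z)=\tfrac1\pi\int_0^\infty\frac{\log(z+u)}{1+u^2}\,du$), while the branch-cut contribution along the negative real axis, computed from the jump relation $e^{B(z)}=(1-iz\,\sigma(z))e^{-B(-z)}$, produces the integral term. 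Your argument is more elementary --- only real integrals, no contour shifting --- and has the further virtue of making the $\pi/8$ appear transparently as the constant phase shift $\vartheta_\lambda$ rather than as $\im B(i)$; the $\Phi$-functional equation you invoke is the real-variable shadow of the paper's jump relation for $e^{B}$.

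One warning: do not leave the constants and the sign to faith. Carrying your computation through exactly, the sine part contributes $\frac{1}{\sqrt{2\pi}}(t^2+x^2)^{-3/4}\sin\!\big(\tfrac\pi8+\tfrac32\arctan\tfrac xt\big)$, and the $G_\lambda$ part --- entering with a \emph{minus} sign from $F_\lambda=\sin-G_\lambda$ --- contributes $-\frac{1}{2\pi^{3/2}}\int_0^\infty\frac{y\,\Phi(y)}{(1+y^2)(xy+t)^{3/2}}\,dy$. This differs from the displayed formula in two places (the leading constant and the sign of the integral). Your minus sign is in fact consistent with $F_\lambda(0)=0$: both pieces are strictly positive at $x=0$ and must cancel to give $q_t(0^+)=0$, so the stated formula appears to carry misprints. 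Pinning these down is exactly the sort of thing your ``correct constants and sign'' remark should not paper over.
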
 
	\begin{proof}
	   Since $\psi(\xi)=\xi^{1/2}$, $\psi'(\xi)=1/(2\sqrt{\xi})$ the formula \eqref{eq:qt:formula} reads as
		\begin{equation*}
		  q_t(x) = \frac{\sqrt{2}}{\pi}\int_0^\infty e^{-t\lambda} F_x(\lambda)\sqrt{\lambda}d\lambda\/,
		\end{equation*}
		where we used the scaling property $F_\lambda(x)=F_1(\lambda x)= F_x(\lambda)$.	By the Plancherel's theorem we get, for fixed $b\in (0,t)$, that 
		\begin{eqnarray}
		\label{eq:qt:Cauchy}
		   q_t(x) &=& \int_0^\infty \left(e^{-b\lambda}F_x(\lambda)\right)\left(e^{-(t-b)\lambda}\frac{\sqrt{2\lambda}}{\pi}\right)d\lambda \nonumber\\
			&=& \frac{1}{2\pi} \int_{-\infty}^\infty \mathcal{L}F_x(b+is) \overline{\mathcal{L}f(t-b+is)}ds\nonumber\\
			&=& \frac{1}{2\pi i} \mathcal{L}F_x(b+is)\int_{b-i\infty}^{b+i\infty} \mathcal{L}F_x(z)\mathcal{L}f(t-z)dz\/,
		\end{eqnarray}
		where $f(x)=\sqrt{2x}/\pi$. The Laplace transform of $f$ can easily be computed as follows 
		$$
		  \mathcal{L}f(z) = \frac{\sqrt{2}}{\pi}\int_0^\infty e^{-z x}\sqrt{x}dx = \frac{1}{\sqrt{2\pi}}\frac{1}{z^{3/2}}\/,\quad \re(z)>0\/.
		$$
    Formula \eqref{eq:Fl:laplacetransform} gives
		$$
		\mathcal{L}F_x(z) = \frac{1}{\sqrt{2}}\frac{x}{x^2+z^2}\exp\left(\frac{1}{\pi}\int_0^\infty \frac{z\log(1+u/x)}{z^2+u^2}du\right)\/.
		$$
		Substituting $u=z/s$ in the last integral we get
		\begin{eqnarray*}
		    \int_0^\infty \frac{z}{z^2+u^2}\log(1+u/x) du &=& \int_0^\infty \frac{\log\left(1+\frac{z}{xs}\right)}{1+s^2}\,ds
				= \int_0^\infty \frac{\log(z/x+s)}{1+s^2}\,ds\/,
		\end{eqnarray*}
		where the last equality follows from the fact that 
		\begin{eqnarray*}
			\int_0^\infty \frac{\log s\,ds}{1+s^2}=\left(\int_0^1+\int_1^\infty\right)\frac{\log s\,ds}{1+s^2}=\int_0^1 \frac{\log s\,ds}{1+s^2}+\int_0^1 \frac{\log(1/s)\,ds}{1+s^2} = 0\/.
		\end{eqnarray*}
		Finally, the function 
	  $$
		  B(z) = \frac{1}{\pi} \int_0^\infty \frac{\log(z+u)}{1+u^2}du\/,\quad z\in \C\setminus (-\infty,0]\/,
		$$
		studied in details in \cite{bib:kkms:2010}, is holomorphic in the region. We recall (see (3.13) in \cite{bib:kkms:2010}) that 
		\begin{eqnarray}
		\label{eq:Bi}
		   B(i) = \frac{\log 2}{2}+i\frac{\pi}{8}
		\end{eqnarray}
		and (see (4.1) in \cite{bib:kkms:2010}) that
		\begin{eqnarray}
		\label{eq:B:jump}
		  e^{B(z)} = (1-iz\sigma(z))e^{-B(-z)}\/,
		\end{eqnarray}
		where $\sigma(z)=1$ for $\im(z)>0$ and $\sigma(z)=-1$ for $\im(z)<0$. Consequently, defining (for fixed $x$) the function of complex variable $z$ 
		$$
		  G_x(z) = \frac{x}{x^2+z^2}\frac{1}{(t-z)^{3/2}} \exp\left(\frac{1}{\pi}\int_0^\infty \frac{\log(z/x+u)}{1+u^2}du\right) = \frac{x}{x^2+z^2}\frac{1}{(t-z)^{3/2}}e^{B(z/x)}\/,
		$$
		it is easy to see that $G_x(z)$ is  a meromorphic function on $\{z\in \C: \re(z)<t\}\setminus(-\infty,0]$ with single poles at $ix$ and $-ix$. To evaluate the integral \eqref{eq:qt:Cauchy} we integrate $G_x$ over the (positively oriented) curve consisting of (see figure \ref{fig:contour} below)
		\begin{itemize}
		   \item[(i)] four horizontal segments: 
			\begin{enumerate}
			   \item[] $\gamma_1 = \{z: \im(z)=n,\re(z)\in[-n,b]\}$, 
			   \item[] $\gamma_2 = \{z: \im(z)=-n,\re(z)\in[-n,b]\}$,
				 \item[] $\gamma_3=\{z: \im(z)=1/n,\re(z)\in[-n,0]\}$,
				 \item[] $\gamma_4=\{z: \im(z)=-1/n,\re(z)\in[-n,0]\}$;
			\end{enumerate}
			\item[(ii)] three vertical segments: 
			 \begin{enumerate}
			   \item[] $\gamma_5=\{z: \re(z)=-n,\im(z)\in[1/n,n]\}$,
				 \item[] $\gamma_6=\{z: \re(z)=-n,\im(z)\in[-n,-1/n]\}$, 
				 \item[] $\Gamma = \{z: \im(z)=b,\re(z)\in[-n,n]\}$
				\end{enumerate}
			\item[(iii)] a semi-circle: $\gamma_7=\{z: |z|=1/n, \Re(z)\geq 0\}$.
		\end{itemize}
		\begin{figure}[h]
		\label{fig:contour}
    \begin{picture}(256,256)(0,0)
      \put(0,128){\vector(1,0){256}}
      \put(128,0){\vector(0,1){256}}
      \linethickness{1pt}
      \put(128,128){\oval(5,5)[r]}
      \put(128,126){\line(-1,0){100}}
      \put(128,130){\line(-1,0){100}}
      \put(28,126){\line(0,-1){80}}
      \put(28,130){\line(0,1){80}}
      \put(28,210){\line(1,0){170}}
      \put(28,46){\line(1,0){170}}
      \put(198,46){\line(0,1){164}}
      \put(201,123){$_b$}\put(130,40){$_{-in}$}
      \put(130,213){$_{in}$}\put(15,123){$_{-n}$}
      \linethickness{0.5pt}
      \put(195,138){$\vector(0,1){10}$}
			\put(100,215){$_{\gamma_1}$} \put(100,42){$_{\gamma_2}$}
			\put(18,165){$_{\gamma_5}$} \put(18,92){$_{\gamma_6}$}
      \put(100,135){$_{\gamma_3}$} \put(100,120){$_{\gamma_4}$}
      \put(130,134){$_{\gamma_7}$} 
      \put(202,170){$_{\Gamma}$}
      \put(80,135){$\vector(1,0){10}$} \put(90,120){$\vector(-1,0){10}$}
      \put(128,168){\circle*{2}}\put(128,88){\circle*{2}}
      \put(118,168){$_{ix}$}\put(112,88){$_{-ix}$}
			\put(230,128){\circle*{2}}\put(228,123){$_{t}$}
     \end{picture}
		\caption{The contour of integration.}
     \end{figure}
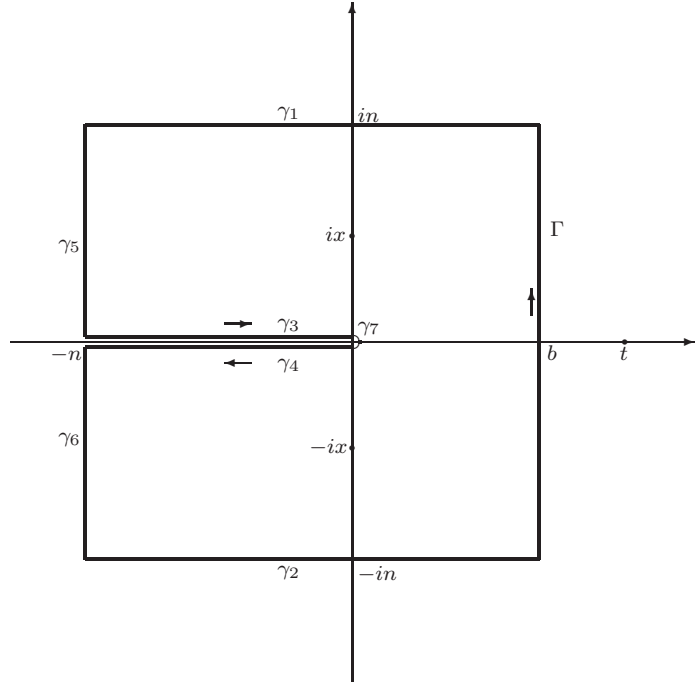
	First we compute the residua of $G_x$ at $ix$ and $-ix$. By \eqref{eq:Bi}, we have
	\begin{equation*}
	    \textrm{Res}\left(G_x,ix\right) = \frac{1}{2i} \frac{\sqrt{2}e^{i\pi/8}}{(t-ix)^{3/2}} \/,\quad\quad
			\textrm{Res}\left(G_x,-ix\right) = -\frac{1}{2i} \frac{\sqrt{2}e^{-i\pi/8}}{(t+ix)^{3/2}} \/.
	\end{equation*}
	Since 
	$
	(t\pm ix)^{3/2} = (t^2+x^2)^{3/4}e^{\pm 3i/2\arctan(x/t)}\/,
	$
	we arrive at
	\begin{eqnarray}
	  \label{eq:res}
	\textrm{Res}\left(G_x,ix\right)+\textrm{Res}\left(G_x,-ix\right) &=& \frac{\sqrt{2}\sin \left(\pi/8+\frac{3}{2}\arctan(x/t)\right)}{(t^2+x^2)^{3/4}}\/. 
	\end{eqnarray}
	Using the relation \eqref{eq:B:jump} we obtain
	\begin{eqnarray*}
	    \left(\int_{\gamma_3}+\int_{\gamma_4}\right)G_x(z)dz &\stackrel{n\to \infty}{\longrightarrow}& \int_{-\infty}^0 \frac{x((1-iu/x)-(1+iu/x))e^{-B(-u/x)}}{(x^2+u^2)(t-u)^{3/2}}\,du\/,
	\end{eqnarray*}
	where the last integral, after substituting $y=-xu$, is equal to 
	\begin{eqnarray}
	   \label{eq:inegral1}
	   -2i \int_0^\infty \frac{y}{1+y^2}\frac{1}{(t+yx)^{3/2}}\exp\left(-\frac{1}{\pi}\int_0^\infty \frac{\log(y+u)}{1+u^2}du\right)\,dy\/.
	\end{eqnarray}
   Using the bounds \eqref{eq:Fl:laplaceestimate} we can write
	\begin{eqnarray*}
	   |G_x(z)| = 2\sqrt{\pi}|\mathcal{L}F_x(z)\mathcal{L}f(t-z)|\leq c_1 \frac{|x+z|}{|x^2+z^2|}\frac{1}{|t-z|^{3/2}}\leq c_2\,(\im z)^{-5/2}\/.
	\end{eqnarray*}
 It implies that the integrals of $G_x(z)$ over $\gamma_1$, $\gamma_2$, $\gamma_5$ and $\gamma_6$ vanish as $n$ goes to infinity. Since $G_x(z)$ is bounded in the neighborhood of $0$ ($\re(z)>0$), the same holds for the integral over the semi-circle $\gamma_7$. Now we can finish the computations by applying the residue theorem in order to get
\begin{eqnarray*}
  \frac{1}{2\pi i} \lim_{n\to \infty}\int_{-n}^n G_x(z)dz = \textrm{Res}\left(h_x,ix\right)+\textrm{Res}\left(G_x,-ix\right)-\lim_{n\to \infty}  \left(\int_{\gamma_3}+\int_{\gamma_4}\right)G_x(z)dz\/.
\end{eqnarray*}
Taking into account \eqref{eq:inegral1} and \eqref{eq:res} and dividing both sides by $2\sqrt{\pi}$ lead to the result. 
	\end{proof}
\begin{remark}
  It is also possible to find similar formula for the entrance law density of the symmetric $\alpha$-stable process with index $\alpha\in(0,1)$. 
  Using the scaling property $F_{\lambda}(x)=F_1(\lambda x)$ and writing 
	$$
	   e^{-t\lambda^{\alpha}} = \int_0^\infty e^{-u\lambda} g_t^{(\alpha)}(u)du\/,\quad t>0\/,
	$$
	where $g_t^{(\alpha)}(u)$ is the density of the $\alpha$-stable subordinator we obtain
	$$
	  q_t(x) = \frac{\sqrt{2\alpha}}{\pi} \int_0^\infty \left(\int_0^\infty e^{-u\lambda}F_x(\lambda)\,\lambda^{\alpha/2}d\lambda\right)g_t^{(\alpha)}(u)du\/.
	$$
	The inner integral can be evaluated similarly as in Proposition \ref{prop:Cauchy}. 
\end{remark}

\section{Stable processes}
\label{sec:stable}

For the rest of the paper we focus on stable processes and use the theory of the corresponding generalized eigenfunctions developed in \cite{bib:kk18}. We assume that $X$ is a stable process with characteristic exponent
$$
  \Psi(\xi) = |\xi|^\alpha e^{\pi i \alpha (1/2-\rho)\textrm{sign}(\xi)}\/,\quad \xi\in\R\/.
$$
We exclude spectrally one-sided processes from our considerations, i.e.~we assume that $\alpha\in(0,1]$ and $\rho\in(0,1)$ or $\alpha\in(1,2]$,
but then we assume that $\rho\in(1-1/\alpha,1/\alpha)$. We write $\rho^* = 1-\rho$ and define non-symmetric analogous of $F_1(x)$ defined in Section \ref{sec:sym} for stable processes as follows
\begin{eqnarray}
   \label{eq:stable:F}
   F(x) = e^{\pi \cos(\pi \rho)}\sin(x\,\sin(\pi \rho)+\pi \rho(1-\rho^*)/2)+\frac{\sqrt{\alpha}}{4\pi}S_2(-\alpha \rho^*)G(x)\/,
   \end{eqnarray}
where
\begin{eqnarray}
   \label{eq:stable:G}
   G(x) = \int_0^\infty e^{-zx}z^{\alpha\rho/2-1/2}|S_2(1+\alpha+\alpha\rho^*/2+i\alpha\ln(z)/(2\pi))|^2dz\/.
\end{eqnarray}
The function $S_2(z)=S_2(z;\alpha)$ is the double sine function uniquely determined by the following functional equations
$$
  S_2(z+1)=\frac{S_2(z)}{2\sin(\pi z/\alpha)}\/,\quad S_2(z+\alpha)=\frac{S_2(z)}{2\sin(\pi z)}
$$
together with the normalizing condition $S_2((1+\alpha)/2)=1$ 
(see
\cite{bib:KoKu2006}, \cite{bib:KoKu2007} and Appendix A in \cite{bib:kk18} for equivalent definitions and further properties). We define $F^*(x)$ and $G^*(x)$ by the same formulae as in \eqref{eq:stable:F} and \eqref{eq:stable:G} but with $\rho$ replaced by $\rho^*$ (and consequently $\rho^*$ replaced by $(\rho^*)^*=\rho$). Note that whenever $\rho> 1/2$ the oscillations of $F$ coming from the sine function are multiplied by the exponentially decreasing factor, but then $F^*$ oscillates exponentially, when $x\to \infty$ and the situation is reversed for $\rho<1/2$. The behaviour of $F$ at zero is described by 
(see the proof of Lemma 2.8 in \cite{bib:kk18})
\begin{equation}
 \label{eq:stable:F:zero}
   F(x) = \frac{\sqrt{\alpha}}{2}\frac{S_2(\alpha\rho)}{\Gamma(1+\alpha\rho^*)}\cdot x^{\alpha\rho^*}(1+o(1))\/,\quad x\to 0^+\/.
\end{equation} 
Although the constant $\frac{\sqrt{\alpha}}{2}\frac{S_2(\alpha\rho)}{\Gamma(1+\alpha\rho^*)}$ was not specified in \cite{bib:kk18}, using (1.10) and (1.19) from \cite{bib:kk18}, we obtain that 
$$
  \int_0^\infty e^{-zx}F(x)dx = \frac{\sqrt{\alpha}}{2}S_2(\alpha\rho)z^{-1-\alpha\rho^*}(1+o(1))\/,\quad z\to \infty\/.
$$
Consequently, using the Karamata's Tauberian theorem and the Monotone Density Theorem we obtain \eqref{eq:stable:F:zero}. Moreover, if $\rho>1/2$ then 
\begin{equation}
   \label{eq:stable:F:infty}
	 F(x) = G(x) = O(x^{-\alpha-1})\/,\quad F^*(x) = O(e^{x\cos(\pi\rho^*)})\/,\qquad x\to \infty\/.
\end{equation}
Even though the functions $F$ and $F^*$ do not simultaneously belong to $L^2(0,\infty)$ (for $\rho\neq 1/2$), they can be understood as the generalized eigenfunctions of the semigroups $\mathbf{Q}_t^*$ and $\mathbf{Q}_t$, respectively (see Theorem 1.3 in \cite{bib:kk18}). Moreover, using Theorem 1.1 in \cite{bib:kk18} the transition probability density of the process $X$ killed when exiting the positive half-line is given by
\begin{equation}
   \label{eq:stable:qtxy}
	  q_t^*(x,y) = \frac{2}{\pi} \int_0^\infty e^{-t\lambda^\alpha} F(\lambda x)F^*(\lambda y)d\lambda\/,\quad x,y,t>0\/,
\end{equation}
whenever $\alpha>1$. Note that the restriction on $\alpha$ ensures that the exponential oscillations of $F$ (or $F^*$) are suppressed by the factor $e^{-t\lambda^{\alpha}}$, which makes the integral convergent. The formula \eqref{eq:stable:qtxy} is the analogue of the integral representation for subordinate Brownian motions presented in \cite{bib:mk10}. Note also that assuming $\rho=1/2$ we have $F(x)=F^*(x)=F_1(x)$, where $F_\lambda(x)$ is the generalized eigenfunction defined in Section \ref{sec:sym} for symmetric $\alpha$-stable process.

In the next theorem we present a relation between the entrance laws densities $q_t^*(x)$, $q_t(x)$ and the functions $F$, $F^*$.

\begin{theorem}
    Let $(X,\pr)$ be a stable process with parameter $\alpha>1$ and $\rho\in (1-1/\alpha,1/\alpha)$ or $\rho=1/2$. Then 
		\begin{equation*}
		   q_t(x) = \frac{\sqrt{\alpha}}{\pi}\,S_2(\alpha\rho^*)\int_0^\infty e^{-t\lambda^{\alpha}}F(\lambda x)\lambda^{\alpha \rho}d\lambda\/,\,\,
			 q_t^*(x) = \frac{\sqrt{\alpha}}{\pi}\,S_2(\alpha\rho)\int_0^\infty e^{-t\lambda^{\alpha}}F^*(\lambda x)\lambda^{\alpha \rho^*}d\lambda\/,
		\end{equation*}
		for every $x,t>0$.
		
\end{theorem}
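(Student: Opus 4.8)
The plan is to recognise each of the two entrance laws as a renormalised limit, as $x\to 0^+$, of the transition density of the corresponding killed process, and then to pass to this limit in the integral representation \eqref{eq:stable:qtxy} of \cite{bib:kk18}. The symmetric case $\rho=\tfrac12$ is treated separately and directly from Section~\ref{sec:sym}: there $\Psi(\xi)=\psi(\xi^2)$ with $\psi(\xi)=\xi^{\alpha/2}$, a complete Bernstein function regularly varying at infinity of positive order, so \eqref{eq:qt:integcond} holds for every $t_0>0$ and Theorem~\ref{thm:qt:formula} applies; since $\psi'(\lambda^2)=\tfrac{\alpha}{2}\lambda^{\alpha-2}$ and $F_\lambda(x)=F(\lambda x)$, formula \eqref{eq:qt:formula} reads $q_t(x)=\tfrac{\sqrt{2\alpha}}{\pi}\int_0^\infty e^{-t\lambda^\alpha}F(\lambda x)\lambda^{\alpha/2}\,d\lambda$, and the functional equations for $S_2$ together with the normalisation $S_2((1+\alpha)/2)=1$ give $S_2(\alpha/2)=\sqrt2$, which is exactly the asserted identity; $q_t^*=q_t$ by symmetry.

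Assume now $\rho\neq\tfrac12$, hence $\alpha>1$. The input from excursion theory is that, writing $\hat h$ for the renewal function of the descending ladder height process of $X$ and $\check h$ for the analogous object attached to the dual process $-X$ — for a stable process these are, respectively, an $\alpha\rho^*$-stable and an $\alpha\rho$-stable subordinator, normalised compatibly with $L,L^*$ so that $\hat h(x)=x^{\alpha\rho^*}/\Gamma(1+\alpha\rho^*)$ and $\check h(x)=x^{\alpha\rho}/\Gamma(1+\alpha\rho)$ — one has, for $t>0$ and Borel $A\subset(0,\infty)$,
\begin{equation*}
  q_t^*(A)=\lim_{x\to 0^+}\frac{q_t^*(x,A)}{\hat h(x)},\qquad q_t(A)=\lim_{x\to 0^+}\frac{q_t(x,A)}{\check h(x)}.
\end{equation*}
Substituting here \eqref{eq:stable:qtxy} and the duality relation $q_t(x,y)=q_t^*(y,x)=\tfrac2\pi\int_0^\infty e^{-t\lambda^\alpha}F(\lambda y)F^*(\lambda x)\,d\lambda$, and using \eqref{eq:stable:F:zero} (so that $F(\lambda x)/\hat h(x)\to\tfrac{\sqrt\alpha}{2}S_2(\alpha\rho)\lambda^{\alpha\rho^*}$, and, after interchanging $\rho$ and $\rho^*$, $F^*(\lambda x)/\check h(x)\to\tfrac{\sqrt\alpha}{2}S_2(\alpha\rho^*)\lambda^{\alpha\rho}$ as $x\to 0^+$, both locally uniformly in $\lambda$), the only remaining task is to interchange the limit and the $\lambda$-integral. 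This is handled by dominated convergence: near $\lambda=0$ one bounds $F(\lambda x)/\hat h(x)$ by a constant multiple of $\lambda^{\alpha\rho^*}$ via \eqref{eq:stable:F:zero} again, on compact $\lambda$-sets by the boundedness in $x$ of $F(\lambda x)/\hat h(x)$, and for large $\lambda$ by the estimates \eqref{eq:stable:F:infty}, whose possible exponential growth is absorbed by $e^{-t\lambda^\alpha}$ since $\alpha>1$. Passing to the limit then yields precisely the two displayed formulae.

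The main obstacle is the precise value of the normalising constant: the limit identities above are only meaningful once $\hat h$ and $\check h$ are normalised consistently with the normalisation of $L,L^*$ (equivalently of $\kappa,\kappa^*$) fixed in Section~\ref{section:preliminaries}, and one must check that the constant they produce agrees with the explicit value $\tfrac{\sqrt\alpha}{2}\,S_2(\alpha\rho)/\Gamma(1+\alpha\rho^*)$ read off from \cite{bib:kk18} in \eqref{eq:stable:F:zero}. A self-contained alternative avoiding any limiting procedure is to verify \eqref{0000} directly: by Fubini the double Laplace transform of the right-hand side of the claimed formula for $q_t^*$ equals $\tfrac{\sqrt\alpha}{\pi}S_2(\alpha\rho)\int_0^\infty\lambda^{\alpha\rho^*-1}(z+\lambda^\alpha)^{-1}\,\laplace F^*(\xi/\lambda)\,d\lambda$, and one has to identify this with $1/\kappa^*(z,\xi)$; with the explicit Laplace transform of $F^*$ and the closed form of $\kappa^*$ for stable processes in terms of the double sine function, this is a special-function identity provable by a contour-integration argument in the spirit of the Stieltjes-function computation in the proof of Theorem~\ref{thm:qt:formula}, after which uniqueness of the Laplace transform concludes. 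The identity $\int_0^\infty q_t^*(y)\,dy=n^*(t<\zeta)=t^{-\rho^*}/\Gamma(\rho)$ serves as a consistency check on the constant, rigorous at least when the relevant eigenfunction is integrable (i.e.\ $\rho<\tfrac12$).
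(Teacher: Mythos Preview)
Your approach is essentially the one taken in the paper: both proofs start from the limit relation $q_t^*(y)=\lim_{x\to 0^+}q_t^*(x,y)/h^*(x)$ (cited in the paper as Proposition~1 of \cite{bib:chm16}), plug in the integral representation \eqref{eq:stable:qtxy}, use \eqref{eq:stable:F:zero} to identify the pointwise limit of the integrand, and then justify passing the limit under the integral using \eqref{eq:stable:F:infty} together with $\alpha>1$; the formula for $q_t$ then follows by duality. Your separate treatment of $\rho=\tfrac12$ via Theorem~\ref{thm:qt:formula} is a sensible addition, since \eqref{eq:stable:qtxy} is only stated for $\alpha>1$ and so does not by itself cover the symmetric case with $\alpha\le 1$.

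One point worth tightening: your dominated-convergence sketch splits according to fixed regions in $\lambda$ (``near $0$'', ``compact'', ``large''), but the relevant asymptotics \eqref{eq:stable:F:zero} and \eqref{eq:stable:F:infty} are in the variable $\lambda x$, not $\lambda$. For a fixed ``large $\lambda$'' cutoff $\lambda_0$, as $x\to0$ the product $\lambda x$ is still small near $\lambda_0$, so \eqref{eq:stable:F:infty} does not apply there; conversely, there is no $x$-uniform bound on $F(\lambda x)/x^{\alpha\rho^*}$ for $\lambda x$ bounded away from $0$. The paper resolves this by splitting the integral at the moving point $\lambda=c/x$: on $(0,c/x)$ one has $\lambda x<c$ and \eqref{eq:stable:F:zero} gives the dominating function $c_1 e^{-t\lambda^\alpha}\lambda^{\alpha\rho^*}|F^*(\lambda y)|$, while on $(c/x,\infty)$ a direct estimate using \eqref{eq:stable:F:infty} shows the integral is $O(\exp(-tc^\alpha/(2x^\alpha)))$ and hence $o(x^{\alpha\rho^*})$. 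With this adjustment your argument goes through exactly as in the paper; the alternative Laplace-transform route and the total-mass consistency check you outline are not used there.
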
 
\begin{proof}
  We will exploit formula \eqref{eq:stable:qtxy} together with the relation (see Proposition 1 in \cite{bib:chm16})
	\begin{equation}
	   \label{eq:qtxy:limit}
	   \lim_{x\to 0^+}\frac{q_t^*(x,y)}{h^*(x)} = q_t^*(y)\/, \quad y,t>0\/, 
	\end{equation}
	where the renewal function $h^*(x)$  of the ladder height process $H^*$ is defined, in general, by $h^*(x) = \int_0^\infty \pr(H_t\leq x)dt$. In the stable case $H_t^*$ is the $\alpha\rho^*$ stable subordinator and  
	$$
	  h^*(x) = \ex H_1^{-\alpha\rho^*}\cdot x^{\alpha\rho^*} = \frac{x^{\alpha\rho^*}}{\Gamma(1+\alpha\rho^*)}\/,\quad x\geq 0\/.
	$$
	Choosing $c>0$ small enough and using \eqref{eq:stable:F:zero} we can write 
	\begin{eqnarray*}
	   \ind_{\{\lambda x<c\}}e^{-t\lambda^\alpha}\frac{F(\lambda x)}{x^{\alpha\rho^*}}F^*(\lambda y)\leq c_1 e^{-t\lambda^\alpha} \lambda^{\alpha\rho^*}F^*(\lambda y)\/,
	\end{eqnarray*}
	where the latter function is integrable over $(0,\infty)$ (for fixed $t$ and $y$) by \eqref{eq:stable:F:infty}. Thus, by the Lebesgue dominated convergence theorem and \eqref{eq:stable:F:zero} we arrive at
	\begin{equation*}
	    \lim_{x\to 0^+}\frac{1}{x^{\alpha \rho^*}}\int_0^{c/x}e^{-t\lambda^\alpha}\frac{F(\lambda x)}{x^{\alpha\rho^*}}F^*(\lambda y)d\lambda = \frac{\sqrt{\alpha}}{2}\frac{S_2(\alpha\rho)}{\Gamma(1+\alpha\rho^*)} \int_0^\infty e^{-t\lambda^\alpha}F^*(\lambda y)\lambda^{\alpha \rho^*}d\lambda\/.
	\end{equation*}
	Moreover, by \eqref{eq:stable:F:infty}, we can write for $x<1$ that
	\begin{eqnarray*}
	   \int_{c_1/x}^\infty e^{-t\lambda^\alpha} |F(\lambda x)F^*(\lambda y)|d\lambda &\leq& \int_{c_1/x}^\infty e^{-t\lambda^\alpha}e^{\lambda(x\vee y)\cos(\pi(\rho\vee\rho^*))}d\lambda\\
		&\leq& \exp\left(-\frac{tc_1^{\alpha}}{2x^{\alpha}}\right)\int_{c_1}^\infty  e^{-t\lambda^\alpha/2} e^{\lambda(1\vee y)\cos(\pi(\rho\vee\rho^*))}d\lambda\/,
	\end{eqnarray*}
	where the last integral is convergent according to our assumption $\alpha>1$\/.
	It shows that 
	\begin{equation*}
	   \lim_{x\to 0^+}\frac{1}{x^{\alpha\rho^*}}\int_{c_1/x}^\infty e^{-t\lambda^\alpha}F(\lambda x)F^*(\lambda y)d\lambda = 0
	\end{equation*}
	and consequently, by \eqref{eq:qtxy:limit}, we obtain
	\begin{eqnarray*}
	   q_t^*(y) &=& \frac{2\Gamma(1+\alpha\rho^*)}{\pi}\lim_{x\to 0^+}\left(\frac{1}{x^{\alpha\rho^*}}\int_0^\infty e^{-t\lambda^{\alpha}}F(\lambda x)F^*(\lambda y)d\lambda\right)\/,\\
		&=& \frac{\sqrt{\alpha}}{\pi}\,{S_2(\alpha\rho)}\int_0^\infty e^{-t\lambda^\alpha}F^*(\lambda y)\lambda^{\alpha\rho^*}\,d\lambda\/, \quad y,t>0\/.
	\end{eqnarray*}
By duality, we have the corresponding integral representation for $q_t(x)$ with $F^*(x)$ and $\rho^*$ replaced by $F(x)$ and $\rho$.
\end{proof}
The analogue of Theorem \ref{thm:sym:distributions} can now be proved. 

\begin{theorem}
\label{thm:stable:distributions}  Let $(X,\pr)$ be a stable process with parameter $\alpha>1$ and $\rho\in (1-1/\alpha,1/\alpha)$ or $\rho=1/2$. The density of  $(\overline{X}_t,\overline{X}_t-X_t)$  with respect to the Lebesgue measure $dxdy$ on $(0,\infty)^2$ is given by
\begin{equation*}
   \frac{2\alpha\sin(\pi\rho^*)}{\pi^2}\iint_{(0,\infty)^2}\frac{e^{-t\lambda^\alpha}-e^{-tu^\alpha}}{\lambda^\alpha-u^\alpha}F(u y)F^*(\lambda x)\lambda^{\alpha \rho}u^{\alpha\rho^*}du\,d\lambda\/.
\end{equation*}
Moreover, we have
\begin{equation*}
   f_t(x) = \frac{\sqrt{\alpha}}{\pi}\frac{S_2(\alpha\rho)}{\Gamma(\rho)}\int_0^\infty e^{-t\lambda^\alpha}\left(\int_0^{t\lambda^\alpha}\frac{e^udu}{u^{\rho^*}}\right)F^*(\lambda x)\,d\lambda\/,
\end{equation*}
for every $t,x>0$.
\end{theorem}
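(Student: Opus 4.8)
The plan is to reproduce, for stable processes, the scheme of the proof of Theorem~\ref{thm:sym:distributions}: both formulae are obtained by inserting the spectral representations of $q_t(x)$ and $q_t^*(x)$ from the previous theorem into the space--time decompositions \eqref{eq:ftpt:formula} and \eqref{eq:ft:formula}, invoking Fubini's theorem to carry out the time integration first, evaluating an elementary integral, and finally identifying the resulting prefactor by means of the functional equations of the double sine function $S_2$.

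First I would treat the joint law. Since, by the previous theorem, $q_s^*(dx)=q_s^*(x)\,dx$ and $q_{t-s}(dy)=q_{t-s}(y)\,dy$, relation \eqref{eq:ftpt:formula} says that the density of $(\overline X_t,\overline X_t-X_t)$ with respect to $dx\,dy$ is $\int_0^t q_s^*(x)\,q_{t-s}(y)\,ds$. I would substitute $q_s^*(x)=\tfrac{\sqrt\alpha}{\pi}S_2(\alpha\rho)\int_0^\infty e^{-s\lambda^\alpha}F^*(\lambda x)\lambda^{\alpha\rho^*}\,d\lambda$ and $q_{t-s}(y)=\tfrac{\sqrt\alpha}{\pi}S_2(\alpha\rho^*)\int_0^\infty e^{-(t-s)u^\alpha}F(uy)u^{\alpha\rho}\,du$. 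The triple integrand over $(s,\lambda,u)\in[0,t]\times(0,\infty)^2$ is absolutely integrable: for $\lambda u$ small the factors $F^*(\lambda x),F(uy)$ are controlled by the power asymptotics \eqref{eq:stable:F:zero}, while for $\lambda$ or $u$ large the (possibly exponentially oscillating) factors are dominated by $e^{-s\lambda^\alpha}$ and $e^{-(t-s)u^\alpha}$ precisely because $\alpha>1$ --- the same estimate that underlies \eqref{eq:stable:qtxy} and the previous theorem. Fubini then lets me integrate in $s$ first,
\[
\int_0^t e^{-s\lambda^\alpha}\,e^{-(t-s)u^\alpha}\,ds=\frac{e^{-tu^\alpha}-e^{-t\lambda^\alpha}}{\lambda^\alpha-u^\alpha}\qquad(\lambda\neq u),
\]
the diagonal being negligible; up to renaming the integration variables this is the divided-difference kernel of the statement. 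Collecting terms yields the announced double integral with prefactor $\tfrac{\alpha}{\pi^2}S_2(\alpha\rho)S_2(\alpha\rho^*)$, so it only remains to check that $S_2(\alpha\rho)S_2(\alpha\rho^*)=2\sin(\pi\rho^*)$. This is where the double-sine identities enter: the inversion relation $S_2(z)S_2(1+\alpha-z)=1$ at $z=\alpha\rho$ gives $S_2(\alpha\rho)S_2(1+\alpha\rho^*)=1$ (using $1+\alpha-\alpha\rho=1+\alpha\rho^*$), and $S_2(z+1)=S_2(z)/(2\sin(\pi z/\alpha))$ at $z=\alpha\rho^*$ gives $S_2(1+\alpha\rho^*)=S_2(\alpha\rho^*)/(2\sin(\pi\rho^*))$; combining the two produces the desired identity.

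Next I would treat $f_t$, either by integrating the joint density in $y$ over $(0,\infty)$ --- using $\int_0^\infty q_{t-s}(y)\,dy=q_{t-s}\big((0,\infty)\big)=n(t-s<\zeta)$ --- or directly from \eqref{eq:ft:formula}, $f_t(x)=\int_0^t n(t-s<\zeta)\,q_s^*(x)\,ds$. For a stable process with positivity parameter $\rho$ the ascending ladder time process is a $\rho$-stable subordinator, so the stable analogue of \eqref{eq:sym:n} is $n(t<\zeta)=t^{-\rho}/\Gamma(1-\rho)$. Inserting this together with the representation of $q_s^*(x)$ and interchanging (Fubini, with the same estimate as above), I am reduced to the inner integral $\int_0^t(t-s)^{-\rho}e^{-s\lambda^\alpha}\,ds$, which after the substitutions $r=t-s$ and then $u=r\lambda^\alpha$ equals $e^{-t\lambda^\alpha}\lambda^{-\alpha\rho^*}\int_0^{t\lambda^\alpha}u^{-\rho}e^{u}\,du$; the factor $\lambda^{-\alpha\rho^*}$ cancels the $\lambda^{\alpha\rho^*}$ carried along by $q_s^*$, and after collecting the remaining constants one obtains the displayed formula for $f_t(x)$.

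I do not expect a genuine obstacle: the whole argument is substitution, one application of Fubini, and an elementary integral, just as for Theorem~\ref{thm:sym:distributions}. The two mildly delicate points are (i) recognising the prefactor through the $S_2$ functional equations, which is the short computation above, and (ii) justifying Fubini when $\rho\neq\tfrac12$, where one of $F,F^*$ oscillates with exponentially growing amplitude and absolute integrability rests crucially on $\alpha>1$; but this is dealt with exactly as in the previous theorem and in \cite{bib:kk18}.
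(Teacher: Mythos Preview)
Your approach is exactly that of the paper: plug the spectral representations of $q_t$ and $q_t^*$ into \eqref{eq:ftpt:formula} and \eqref{eq:ft:formula}, apply Fubini, compute the elementary time integral, and reduce the prefactor via the double-sine identities $S_2(z)S_2(1+\alpha-z)=1$ and $S_2(z+1)=S_2(z)/(2\sin(\pi z/\alpha))$.

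The one place you diverge from the paper is the value of $n(t<\zeta)$. The paper records $n(t<\zeta)=\dfrac{1}{\Gamma(1-\rho^*)\,t^{\rho^*}}$ (arguing through the dual ladder time $(L^*)^{-1}$, a $\rho^*$-stable subordinator), and this is what produces the $u^{-\rho^*}$ and the constant $S_2(\alpha\rho)/\Gamma(\rho)$ printed in the statement. Your choice $n(t<\zeta)=t^{-\rho}/\Gamma(1-\rho)$ (ascending ladder time $\rho$-stable) is the one that makes the factor $\lambda^{\alpha\rho^*}$ cancel cleanly, but it gives $u^{-\rho}$ and $S_2(\alpha\rho)/\Gamma(\rho^*)$ instead; so with your input you will not literally reproduce the displayed $f_t$ formula, and you should flag this $\rho\leftrightarrow\rho^*$ discrepancy rather than asserting that the constants match.
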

\begin{proof}
As previously, the result follows from  the integral representations for $q_t(x)$ and $q_t^*(x)$, the relations \eqref{eq:ftpt:formula}, \eqref{eq:ft:formula} and Fubini's theorem. However, since the ladder time process $(L_t^*)^{-1}$ is $\rho^*$-stable subordinator and 
$n^*(t<\zeta)=\pi^*(t,\infty)$, where $\pi^*$ is the L\'evy measure of $(L_t^*)^{-1}$ we have
$$
  n(t<\zeta) = \frac{1}{\Gamma(1-\rho^*)t^{\rho^*}}\/,\quad t>0\/,
$$
which gives the representations for $f_t(x)$. To find the constant in the other formula we use the relations $S_2(z)S_2(1+\alpha-z)=1$ and $2\sin(\pi z/2)S_2(z+1)=S_2(z)$ (see (A.7) and (1.9) in \cite{bib:kk18}).
\end{proof}

\noindent Recall that spectrally one sided L\'evy processes are excluded in Theorem \ref{thm:stable:distributions}. However let us note that in this case, some expressions of the law of $(\overline{X}_t,\overline{X}_t-X_t)$ 
in terms of the density of $X_t$ are given in Theorem 3.1 in \cite{bib:chm18}.

\end{document}